\newcommand{\qbar}{{\bar{q}}}
\newcommand{\jbar}{{\bar{j}}}
\newcommand{\lbar}{{\bar{l}}}
\def\ddb{\partial\bar\partial}
\def\tr{\operatorname{tr}}
\def\Re{\operatorname{Re}}
\def\Im{\operatorname{Im}}
\def\Ric{\operatorname{Ric}}
\author{Shuang Liang}
\address{Department of Mathematics\\
  Columbia University\\
  New York, NY USA 10027}
\email[S. Liang]{sliang@math.columbia.edu}
\author{Xi Sisi Shen}
\address{Department of Mathematics\\
  Columbia University\\
  New York, NY USA 10027}
\email[X. S. Shen]{xss@math.columbia.edu}
\author{Kevin Smith}
\address{Department of Mathematics\\
  Columbia University\\
  New York, NY USA 10027}
\email[K. Smith]{kjs@math.columbia.edu}
\newtheorem{proposition}{Proposition}
\newtheorem{thm}{Theorem}
\newtheorem{corollary}{Corollary}
\newtheorem{remark}{Remark}
\newtheorem{definition}{Definition}
\numberwithin{equation}{section}
\begin{document}
\bibliographystyle{amsplain}

\title{The continuity equation for Hermitian metrics: Calabi estimates, Chern scalar curvature and Oeljeklaus-Toma manifolds}

\begin{abstract}
We prove local Calabi and higher order estimates for solutions to the continuity equation introduced by La Nave-Tian and extended to Hermitian metrics by Sherman-Weinkove. We apply the estimates to show that on a compact complex manifold the Chern scalar curvature of a solution must blow up at a finite-time singularity. Additionally, starting from certain classes of initial data on Oeljeklaus-Toma manifolds we prove Gromov-Hausdorff and smooth convergence of the metric to a particular non-negative $(1,1)$-form as $t\to\infty$.
\end{abstract}
\maketitle

\section{Introduction}\label{Intro}

\noindent Calabi in 1958 \cite{calabi58} first proved third order estimates for the complex Monge-Amp\`ere equation. This type of estimate was later used by Aubin \cite{aubin70} and Yau \cite{yau78} as a part of their proofs for showing the existence of K\"ahler-Einstein metrics on manifolds with $c_1(X)<0$, and on manifolds with $c_1(X)<0$ or $c_1(X)=0$, respectively. Since then, these techniques have been applied to numerous settings as surveyed in \cite{pss12}. In 1982, Hamilton introduced the Ricci flow \cite{hamilton82} which has proven to be very successful for studying the geometric and topological properties of manifolds in a variety of settings, e.g. by Hamilton in \cite{hamilton95}, by Brendle-Schoen in \cite{bs08} and Perelman in \cite{perelman1,perelman2,perelman3} . If the initial data is K\"ahler, then the Ricci flow preserves the K\"ahler condition and is referred to as the K\"ahler-Ricci flow with evolution equation given by
\begin{equation}\label{Intro:KRF}
\partial_t \omega = -\Ric\omega .
\end{equation}
The solutions to the K\"ahler-Ricci flow solve a parabolic complex Monge-Amp\`ere equation and Cao \cite{cao85} used the K\"ahler-Ricci flow to provide a parabolic proof of the existence of K\"ahler-Einstein metrics on manifolds with $c_1(X)<0$ and $c_1(X)=0.$ 
Another application of the K\"ahler-Ricci flow is the \textit{Analytic Minimal Model Program} proposed by Song-Tian \cite{st07, st12, st17} and Tian \cite{tian08} to carry out Mori's minimal model program \cite{km98} using the flow. The goal of the program is to classify algebraic varieties by running the K\"ahler-Ricci flow to reduce an algebraic variety to the minimal model in its birational class. For instance, the K\"ahler-Ricci flow can be set up to have the behavior of ``blowing down" $(-1)$-curves on a complex surface, a process that when finitely iterated will lead to the minimal model. 

If the initial data is Hermitian but not necessarily K\"ahler and $\Ric\omega$ is interpreted as the Ricci curvature of the Chern connection, then Equation \eqref{Intro:KRF} defines the Chern-Ricci flow introduced by Gill \cite{gill11}. There is a plethora of literature on the study of non-K\"ahler flows \cite{acs,at21,bv17, bv20, bx, dang22,edwards21, ftwz16,fppz21, fppz22,  kawamura19, liu-yang12, ppz18-1, ppz18-2, shen22, streets_tian12,tw13, tw15,tw22, twy15,yury}.
 
La Nave-Tian \cite{nt15} introduced the continuity equation as an elliptic alternative to the K\"ahler-Ricci flow for carrying out the Analytic Minimal Model Program. Starting from a K\"ahler metric $\omega_0$, the continuity equation is given by
\begin{equation}\label{Intro:ContinuityEquation}
    \omega = \omega_0 - t\Ric\omega.
\end{equation}
This equation is similar in nature to the K\"ahler-Ricci flow, with the tangent slope $\partial_t\omega$ being replaced by the secant slope $(\omega-\omega_0)/t$. The continuity equation was extended to Hermitian metrics by Sherman-Weinkove in \cite{sw20} where they prove that the maximal existence time
\begin{equation}\label{Intro:MaximalExistenceTime}
    T = \sup\{t > 0 : \exists\psi\in C^\infty(X) \text{ with } \omega_0 - t\Ric\omega_0 + i\ddb\psi > 0\}
\end{equation}
for a solution of Equation \eqref{Intro:ContinuityEquation} agrees with the maximal existence time of the Chern-Ricci flow \cite{tw15}. They also show that a solution to the continuity equation starting at a Gauduchon metric on a minimal elliptic surface converges to a metric on the base curve, and that the respective manifolds converge in the Gromov-Hausdorff sense. In both the K\"ahler and Hermitian settings, the convergence behavior of the metric along the flow and the corresponding continuity equation are closely related, with the advantages being that solutions to the continuity equation are generally simpler to study: the maximum principle is more straightforward, $C^0$ estimates are often more immediate, and that a lower Ricci curvature bound always holds. With all of these analytic advantages, the continuity equation is a promising tool to use to carry out a Hermitian version of the Analytic Minimal Model Program.

We will also consider the normalized continuity equation
\begin{equation}\label{Intro:NormalizedContinuityEquation}
(t+1)\omega = \omega_0 - t\Ric\omega.
\end{equation}
Scaling by a factor of $t+1$ converts solutions of \eqref{Intro:NormalizedContinuityEquation} into solutions of \eqref{Intro:ContinuityEquation} and vice versa. In previous work by the second and third authors \cite{ss22} solutions to Equations \eqref{Intro:ContinuityEquation} and \eqref{Intro:NormalizedContinuityEquation} were studied on Hopf and Inoue surfaces.

In this paper, we first prove local Calabi estimates for solutions to the continuity equation:
\begin{thm}\label{Intro:ThmC3}
Let $\chi(t)$ solve the continuity equation \eqref{Intro:ContinuityEquation} or the normalized continuity equation \eqref{Intro:NormalizedContinuityEquation} starting at a Hermitian metric $\chi_0$ in a neighborhood of $B_r$ for fixed $r\in (0,1)$ and $t\in [0,T]$. Assume that there exists a Hermitian metric $\hat\chi$ and a uniform constant $K > 1$ such that for all $t\in [0,T],$
\begin{align}
    K^{-1}\hat\chi \le \chi(t)\le K\hat\chi \ \text{ on }B_r.\label{Intro:C3_assumption}
\end{align}
Then for any fixed $\varepsilon > 0$ there exists a uniform constant $C$ depending only on $K,\chi_0,\hat\chi,\varepsilon$ such that for all $t\in [\varepsilon,T]$
\begin{align}\label{Intro:C3Estimate}
    |\hat\nabla \chi|^2_\chi \le \frac{C}{r^2} \text{ on } B_{r/2}
\end{align}
where $\hat\nabla$ is the Chern connection of $\hat\chi.$
\end{thm}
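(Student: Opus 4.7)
The plan is to adapt the classical Calabi third-order estimate for complex Monge-Amp\`ere equations to the Hermitian continuity equation, combined with a spatial cutoff and a maximum principle on $B_r \times [\varepsilon,T]$. Set $\Psi^k_{ij} := \chi^{k\bar{q}}\hat\nabla_i\chi_{j\bar{q}}$, the difference tensor of the Chern connections of $\chi$ and $\hat\chi$, so that $S := |\Psi|^2_\chi$ is equivalent, up to constants depending on $K$ and $\hat\chi$, to $|\hat\nabla\chi|^2_\chi$. The strategy is to derive a differential inequality for $S$ under a linear operator $\mathcal{L}$ associated to the continuity equation, and then apply the maximum principle to
\[
Q := \eta^2 S + A\,\tr_{\hat\chi}\chi,
\]
where $\eta$ is a standard spatial cutoff supported in $B_r$, equal to $1$ on $B_{r/2}$, with $|\nabla\eta|\le C/r$ and $|\nabla^2\eta|\le C/r^2$, and $A = A(K,\chi_0,\hat\chi)$ is a large constant to be chosen below.

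The main computation is that of $\mathcal{L}S$ and $\mathcal{L}\,\tr_{\hat\chi}\chi$. I would take $\mathcal{L} = \chi^{i\bar j}\partial_i\partial_{\bar j} - \partial_t$, where the $\partial_t$ is included to absorb the explicit $t$-dependence in \eqref{Intro:ContinuityEquation} by differentiating the equation once in time and invoking the identity $(1-t\Delta_\chi)\tr_\chi\partial_t\chi = -R$ obtained from $\partial_t\Ric\chi = -i\ddb\tr_\chi\partial_t\chi$. Differentiating the continuity equation twice in the $\hat\nabla$-connection then expresses third derivatives of $\chi$ in terms of $\hat\nabla\chi$ and bounded quantities depending on $\chi_0$ and $\hat\chi$. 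Following the K\"ahler calculation of Phong-Sesum-Sturm and its Hermitian adaptations by Tosatti-Weinkove and Sherman-Weinkove, one obtains schematically
\[
\mathcal{L}S \;\ge\; c(K)\,|\hat\nabla\Psi|^2_\chi \;-\; C(K,\chi_0,\hat\chi)(1+S),
\]
together with the Yau-type bound $\mathcal{L}\,\tr_{\hat\chi}\chi \ge c(K)\,S - C$. Cubic-in-$|\Psi|$ terms arising from the torsion of $\chi$ and $\hat\chi$, absent in the K\"ahler case, are absorbed into the good gradient term via Cauchy-Schwarz.

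For $A = A(K,\chi_0,\hat\chi)$ large enough, one concludes that $\mathcal{L}Q \ge \tfrac{1}{2}c(K)\,\eta^2 S - C/r^2$, with boundary-type terms $|\nabla\eta|^2 S$ arising from the cutoff absorbed at the cost of the same factor $r^{-2}$. If the maximum of $Q$ on $B_r\times[\varepsilon,T]$ is attained in the interior or on the top face $\{t=T\}$, the maximum principle gives $\mathcal{L}Q \le 0$ there, forcing $\eta^2 S \le C/r^2$ and hence \eqref{Intro:C3Estimate} on $B_{r/2}$. If the maximum is attained on the bottom face $\{t=\varepsilon\}$, it is controlled by a separate short-time bound for $S$ at $t=\varepsilon$; the $\varepsilon$-dependence of $C$ enters precisely here, reflecting the degeneracy of the equation as $t\to 0$. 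The main technical obstacle I anticipate is the careful bookkeeping of Hermitian torsion contributions in $\mathcal{L}S$ so that all cubic-in-$|\Psi|$ terms genuinely fit into Cauchy-Schwarz, together with a clean choice of $\mathcal{L}$ that both preserves the good gradient term $|\hat\nabla\Psi|^2_\chi$ and behaves well under $r^{-2}$-weighted localization.
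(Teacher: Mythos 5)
Your proposal correctly identifies the Calabi quantity $S = |\Psi|^2_\chi$, the cutoff-and-maximum-principle strategy, and the schematic differential inequalities $\Delta S \gtrsim |\hat\nabla\Psi|^2 - C(1+S)$ and $\Delta\tr_{\hat\chi}\chi \gtrsim S - C$. However, you treat the continuity equation as a parabolic problem: you introduce the operator $\mathcal{L} = \chi^{i\bar j}\partial_i\partial_{\bar j} - \partial_t$, run the maximum principle over the space-time slab $B_r\times[\varepsilon,T]$, and then defer the parabolic boundary face $\{t=\varepsilon\}$ to ``a separate short-time bound for $S$ at $t=\varepsilon$'' which you never supply. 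This is a genuine gap: the continuity equation is \emph{elliptic} at each fixed $t$ (indeed that is its principal advantage over the Chern-Ricci flow), so the correct move is to fix $t\in[\varepsilon,T]$ and apply the purely spatial maximum principle with $\Delta = \Delta_{\chi(t)}$. No initial-slice bound is needed, and the parabolic framing introduces a circularity (bounding $S$ at $t=\varepsilon$ would itself require the elliptic argument). The time-differentiated identity $(1-t\Delta_\chi)\tr_\chi\partial_t\chi = -R$ you invoke is not used and is unnecessary.

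You also misattribute the source of the $\varepsilon$-dependence. It does not come from a parabolic initial condition; it comes from the algebraic identity
\[
R^k{}_j = -\frac{\chi^{k\bar q}\chi_{\bar q j} - \chi^{k\bar q}(\chi_0)_{\bar q j}}{t},
\]
obtained by directly inverting the continuity equation, and from its once-differentiated consequence
\[
\nabla_i R^k{}_j = \frac{1}{t}\bigl(\chi^{k\bar q}\hat\nabla_i(\chi_0)_{\bar q j} - \chi^{k\bar q}(\chi_0)_{\bar q r}\Psi^r_{ij}\bigr).
\]
The $1/t$ factor is what forces the restriction $t\geq\varepsilon$; plugging these into $\Delta\Psi_{ij}{}^k = -\nabla^{\bar q}R_{\bar q i}{}^k{}_j + \nabla^{\bar q}\hat R_{\bar q i}{}^k{}_j$ and then into the Bochner formula for $\Delta S$ yields the estimate with constants depending on $1/\varepsilon$, $K$, $\chi_0$, and $\hat\chi$. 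Finally, a minor structural difference: the paper uses the test function $\rho^2\,S/(N - \tr_{\hat\chi}\chi) + A\tr_{\hat\chi}\chi$ rather than your $\eta^2 S + A\tr_{\hat\chi}\chi$; the division by $N - \tr_{\hat\chi}\chi$ makes the gradient cancellation at a maximum cleaner, although with more bookkeeping your simpler ansatz could likely be made to work as well.
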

\begin{remark}
    More specifically, the constant above will depend on $K$, $\varepsilon$, the first derivative of $\chi_0$, first and second derivatives of the torsion of $\chi_0$, and the curvature and first derivative of the curvature of $\hat\chi$.
\end{remark}

\noindent The specific dependence of the constants will be important when we apply the following Corollary to the rescaled metric in the setting of Section \ref{OT}. Note that the above estimate is independent of $T$ and so $T$ can be taken to be infinity. Once estimates up to third order are obtained, a straightforward bootstrapping argument can be used to obtain higher order estimates:

\begin{corollary}\label{Intro:CorCk}
Let $\chi$ solve \eqref{Intro:ContinuityEquation} or \eqref{Intro:NormalizedContinuityEquation} and assume that we already have the estimate \eqref{Intro:C3Estimate}. Then there exist constants $C_k,\gamma_k$ for $k=1,2,3\ldots$ such that
\begin{align*}
    |\hat\nabla^k_{\mathbb{R}}\chi|^2_{\hat\chi} \leq \frac{C_k}{r^{\gamma_k}} \text{ on } B_{r/4},
\end{align*}
where $\hat\nabla_{\mathbb{R}}$ is the Levi-Civita covariant derivative with respect to $\hat\chi.$
\end{corollary}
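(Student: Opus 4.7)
The plan is to reduce the corollary to standard elliptic bootstrapping applied to a scalar complex Monge-Amp\`ere type equation for a local potential. On the ball $B_r$, since the Chern-Ricci form satisfies $\Ric(\chi) = -i\ddb\log\det\chi$, the continuity equation (respectively its normalized version) implies that $\chi - \chi_0$ (resp.\ $(t+1)\chi - \chi_0$) is $i\ddb$-exact, so I may locally write $\chi = \chi_0 + i\ddb\phi$ for a potential $\phi$. After absorbing a pluriharmonic function into $\phi$, the potential satisfies a complex Monge-Amp\`ere equation of the schematic form
\[
\log\det(\chi_0 + i\ddb\phi) = \tfrac{1}{t}\phi + F_0,
\]
where $F_0$ depends only on $\chi_0$ (and on $t$ through an additive constant in the normalized case). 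The assumption \eqref{Intro:C3_assumption}, combined with the comparison between $\chi$ and $\hat\chi$, gives a uniform $C^{1,1}$ bound on $\phi$, and the Calabi estimate \eqref{Intro:C3Estimate} gives a uniform $L^\infty$ bound on $\hat\nabla\chi$, i.e., on the third real derivatives of $\phi$, both with explicit $r$-dependence inherited from Theorem~\ref{Intro:ThmC3}.

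Next, I would promote these pointwise bounds to a $C^{2,\alpha}(B_{r/2})$ bound on $\phi$ for any $\alpha\in (0,1)$, either by interpolation or by Morrey embedding applied to the implied $W^{3,p}$-control for large $p$. With $\chi^{i\bar j}\in C^\alpha$, the Monge-Amp\`ere equation becomes a linear second-order elliptic equation with H\"older coefficients in the unknown $\phi$. Differentiating the equation once in a local coordinate direction yields a linear elliptic equation for $\partial_k\phi$ with $C^\alpha$ coefficients and $C^\alpha$ right-hand side. Interior Schauder estimates on a slightly shrunken ball then bound $\partial_k\phi$ in $C^{2,\alpha}$, i.e., $\phi \in C^{3,\alpha}$. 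Iterating the differentiate-and-Schauder procedure on a nested sequence of balls lying between $B_{r/2}$ and $B_{r/4}$ produces $\phi \in C^{k,\alpha}$ for every $k$, and hence the stated bounds on $|\hat\nabla^k_{\mathbb{R}}\chi|_{\hat\chi}$ on $B_{r/4}$ after one further differentiation and comparison with the fixed background metric $\hat\chi$.

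The main obstacle is not conceptual but rather careful bookkeeping: tracking the power $\gamma_k$ of $r^{-1}$ accumulated across the finitely many applications of Schauder estimates and ball-shrinkings, and checking at each stage that the constants depend only on $K$, $\varepsilon$, $\chi_0$, and $\hat\chi$ (with dependence on higher derivatives of the initial and background metrics entering at higher orders). Since only finitely many iterations are required for each fixed $k$, this is routine once the $C^{2,\alpha}$ starting estimate and the Monge-Amp\`ere form of the equation are in place.
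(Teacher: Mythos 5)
Your proposal is correct and follows essentially the same route as the paper: reduce to a scalar Monge--Amp\`ere equation for the local potential, observe that the Calabi estimate \eqref{Intro:C3Estimate} gives Lipschitz (hence $C^{0,\alpha}$) control of the metric coefficients, differentiate once to obtain a linear second-order elliptic equation for $\partial_\ell\phi$, and iterate interior Schauder estimates on nested balls. The only cosmetic differences are that you write the potential relative to $\chi_0$ rather than $\chi_0 - t\Ric\chi_0$ (equivalent locally, since $\Ric\chi_0$ is $i\ddb$-exact on $B_r$), and you mention interpolation/Morrey to obtain the initial $C^{2,\alpha}$ bound, which is unnecessary since the pointwise bound on $\hat\nabla\chi$ already gives $\phi\in C^{2,1}\subset C^{2,\alpha}$.
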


\noindent In \cite{sw13}, Sherman-Weinkove prove analogous higher order estimates along the Chern-Ricci flow. We also note that an alternative method for obtaining higher order estimates would be to use the method of Evans-Krylov \cite{evans82,krylov83} as was done in \cite{gs15}. The above theorem may be helpful for studying solutions to the continuity equation on compact subsets away from a subvariety, which is the case for the Chern-Ricci flow. We then apply these estimates to prove the blow-up of the Chern scalar curvature at a finite-time singularity:

\begin{thm}\label{Intro:ThmBlowup}
Let $X$ be a compact complex manifold with a Hermitian metric $\omega_0$, and let $\omega$ be the unique solution of either the continuity equation \eqref{Intro:ContinuityEquation} or the normalized continuity equation \eqref{Intro:NormalizedContinuityEquation} which exists until the maximal existence time $T$. Then either $T = \infty$ or the Chern scalar curvature $R$ of $\omega$ satisfies
\begin{equation}
    \limsup_{t\rightarrow T} \left(\sup_X R\right) = \infty.
\end{equation}
\end{thm}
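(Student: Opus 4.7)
The plan is to argue by contradiction: assume $T < \infty$ and $\sup_X R \leq C_0$ uniformly on $[0, T)$, and derive a contradiction by extending the solution smoothly past $T$. The first move is to trace the continuity equation with respect to $\omega$ itself. For \eqref{Intro:ContinuityEquation} this yields $\tr_\omega \omega_0 = n + tR$, and for \eqref{Intro:NormalizedContinuityEquation} it yields $\tr_\omega \omega_0 = (t+1)n + tR$; under the scalar curvature hypothesis both give $\tr_\omega \omega_0 \leq C_1$ on $[0,T)$, equivalent to the one-sided metric bound $\omega \geq c_0 \omega_0$.

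Next I would upgrade this to a two-sided bound $c_0 \omega_0 \leq \omega \leq C_0' \omega_0$ by a Hermitian Monge-Amp\`ere-type $C^2$ argument. For every $t < T$ the definition \eqref{Intro:MaximalExistenceTime} of the maximal time supplies a function $\psi_t$ with $\hat\omega_t := \omega_0 - t\Ric\omega_0 + i\ddb\psi_t > 0$. Using the identity $\Ric\omega = \Ric\omega_0 - i\ddb u$ with $u = \log(\omega^n/\omega_0^n)$, the continuity equation rewrites as $\omega = \hat\omega_t + i\ddb(tu - \psi_t)$, a Monge-Amp\`ere-type relation to which the Cherrier--Tosatti--Weinkove Hermitian version of Yau's second-order estimate can be applied. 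Running the maximum principle on $Q = \log \tr_{\hat\omega_t}\omega - A(tu - \psi_t)$ with $A$ chosen large enough to absorb the torsion terms of $\hat\omega_t$ produces $\tr_{\hat\omega_t} \omega \leq C_2$, and hence $\omega \leq C_0' \omega_0$.

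With the two-sided bound in hand, the hypothesis \eqref{Intro:C3_assumption} of Theorem \ref{Intro:ThmC3} is satisfied globally (taking $\hat\chi = \omega_0$ and covering $X$ by finitely many unit coordinate balls). Theorem \ref{Intro:ThmC3} then provides a uniform $C^3$ bound on $\omega$ on $X \times [\varepsilon, T)$, and Corollary \ref{Intro:CorCk} upgrades this to uniform $C^k$ bounds for every $k$. An Arzel\`a--Ascoli argument produces a smooth Hermitian limit $\omega(T)$ as $t \to T^-$, and short-time solvability of the continuity equation starting from $\omega(T)$ extends the solution to $[T, T+\delta)$, contradicting the maximality of $T$.

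The central difficulty is the Hermitian $C^2$ estimate in the second paragraph. In particular, $\psi_t$ is only guaranteed to exist, not to stay uniformly bounded as $t \to T^-$, so one cannot simply apply standard Yau-type arguments to $\psi_t$ itself; the right object is the combination $tu - \psi_t$, whose $i\ddb$ equals $\omega - \hat\omega_t$ and is therefore controlled directly by the equation together with the lower bound from the first step. Verifying that the resulting $C^2$ estimate actually closes, in the presence of the torsion of $\hat\omega_t$ and without a uniform bound on $\psi_t$, is the main technical obstacle.
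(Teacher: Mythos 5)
Your overall framework is the right one and matches the paper's: argue by contradiction, trace the equation to get $\tr_\omega\omega_0 = n + tR \leq C$ (hence $\omega\geq c_0\omega_0$), upgrade to a two-sided bound $C^{-1}\omega_0\leq\omega\leq C\omega_0$, invoke Theorem \ref{Intro:ThmC3} and Corollary \ref{Intro:CorCk} for full regularity, and extend past $T$ by Arzel\`a--Ascoli plus ellipticity. The gap is exactly where you flag it: the upper bound $\omega\leq C\omega_0$. Your plan to run a Cherrier--Tosatti--Weinkove $\log\tr$ maximum principle on $Q=\log\tr_{\hat\omega_t}\omega - A(tu-\psi_t)$ has two problems that you cannot wave away. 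First, at the maximum of $Q$ that argument yields $\tr_{\hat\omega_t}\omega\leq C\exp(A\,\mathrm{osc}(tu-\psi_t))$ or similar, so without a $C^0$ bound on the potential the estimate does not close; controlling $i\ddb(tu-\psi_t)$ does not control its oscillation, and $\psi_t$ is only known to exist, not to be bounded as $t\to T^-$. Second, even a bound $\tr_{\hat\omega_t}\omega\leq C$ would not produce $\omega\leq C\omega_0$ unless $\hat\omega_t\leq C\omega_0$, which again requires control of $\psi_t$. So as written, the second paragraph is a genuine missing step, not merely a technical wrinkle.

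The paper sidesteps all of this by never introducing $\psi_t$ or a Tosatti--Weinkove type trace estimate. Instead it fixes the canonical potential: writing $\omega=\omega_0-t\Ric\omega_0+i\ddb\varphi$, the normalization $\varphi(0)=0$ forces $\varphi$ to solve the scalar equation $t\log(\omega^n/\omega_0^n)-\varphi=0$. An elementary maximum principle then gives $\varphi\leq C$ (at a max of $\varphi$ one has $i\ddb\varphi\leq 0$, so $\omega\leq\omega_0-t\Ric\omega_0\leq C\omega_0$ because $T<\infty$, whence $\varphi=t\log(\omega^n/\omega_0^n)\leq C$), and therefore the determinant bound $\omega^n/\omega_0^n\leq e^{C/t}$. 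Combined with the already-known $\tr_\omega\omega_0\leq C$ via the elementary inequality $\tr_{\omega_0}\omega\leq C\,(\tr_\omega\omega_0)^{n-1}\,\omega^n/\omega_0^n$ (your \eqref{TraceFlip}), this yields $\omega\leq C\omega_0$ directly for $t\geq T/2$. In short: use the scalar equation and trace-flipping rather than a second-order $\log\tr$ estimate, which removes both the $C^0$ obstruction and the dependence on $\psi_t$. If you replace your second paragraph with this, the rest of your argument is correct and coincides with the paper's.
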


\noindent Results of this type were first studied in \cite{zhang10} for the K\"ahler-Ricci flow and in \cite{gs15} for the Chern-Ricci flow and the proofs are by contradiction. Due to the simple nature of the continuity equation in comparison to the flows, the desired contradiction is reached more immediately in this setting. The above result was proved for the continuity equation in \cite{wondo22} in the case where $\omega_0$ is K\"ahler.

In the last part of this paper, we use the Calabi estimates in Theorem \ref{Intro:ThmC3} to obtain smooth convergence results on Inoue surfaces and Oeljeklaus-Toma (OT) manifolds. OT manifolds are higher-dimensional analogues of Inoue surfaces of type $S_M$ introduced by \cite{ot05} whose construction arises from number theory. They are non-K\"ahler compact complex solvmanifolds with negative Kodaira dimension admitting no Vaisman metrics \cite{kasuya13,ov11}. It was shown in \cite{bo15} that they do not admit any analytic hypersurfaces and have algebraic dimension zero. It was shown in \cite{otiman22} that OT manifolds do not admit any balanced metrics but always have locally conformally balanced metrics. See \cite{ados22,apv20,fz22,io19,ovv19,ov13,ot21,verbitsky2,verbitsky14,vuletescu14} for recent progress and open problems on OT manifolds. Using the local Calabi estimates of Theorem \ref{Intro:ThmC3}, alongside an adaptation of a rescaling argument used by Angella-Tosatti \cite{at21} in the setting of Inoue surfaces, we can prove higher order estimates and smooth convergence of solutions to the continuity equation starting from a certain class of initial data. 

\newpage
\begin{thm}\label{Intro:ThmOT}
Let $X$ be an Oeljeklaus-Toma manifold of $\dim_{\mathbb C}X = n$ with the Hermitian metric $\omega_{OT}$, let $\omega$ be the unique solution on $X \times [0,T)$ of \eqref{Intro:NormalizedContinuityEquation} with $\omega_{0} = \omega_{OT} + i\ddb\varphi_0$, where $\varphi_0$ is smooth ($\omega_{OT}$ to be defined in Section \ref{OT}). Then we have $T = \infty$ and
\begin{equation}\label{Intro:eqnOTC2}
    \omega \xrightarrow{C^0} -\Ric(\omega_{OT}).
\end{equation}
In particular, $(X,\omega)$ converges to $(\mathbb{T}^{n-1},d)$ in the Gromov-Hausdorff sense as $t\to\infty$, where $d$ is a flat Riemannian metric on $\mathbb{T}^{n-1}$ determined by $X$. Additionally, there exists a uniform constant $C$ such that for $1 \leq t \leq \infty$
\begin{equation}
    -C\omega\leq\Ric\omega\leq C\omega.
\end{equation}
If we further assume that $\sqrt{-1}\ddb \varphi_0$ is weakly parallel along the leaves (defined in Definition \ref{weaklyparallel}), then the convergence \eqref{Intro:eqnOTC2} is smooth.
\end{thm}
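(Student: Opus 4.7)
The plan is to reduce the normalized continuity equation to a complex Monge-Amp\`ere equation with a degenerating reference metric, combine the maximum principle with the solvable-group symmetry of the OT manifold to get uniform $C^0$ estimates, and finally invoke Theorem \ref{Intro:ThmC3} on a rescaled cover to upgrade to smooth convergence under the weakly parallel hypothesis.

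First I would set up the reference family
\[\hat\omega_t := \tfrac{1}{t+1}\omega_0 + \tfrac{t}{t+1}(-\Ric\omega_{OT}).\]
Using the explicit description of $\omega_{OT}$ in Section \ref{OT}, I expect $-\Ric(\omega_{OT})$ to be a non-negative $(1,1)$-form that pulls back from a flat metric on the base torus $\mathbb{T}^{n-1}$, with one-dimensional kernel along the leaves. This gives $\hat\omega_t > 0$ for all $t\ge 0$, and writing $\omega = \hat\omega_t + i\ddb\psi$ converts \eqref{Intro:NormalizedContinuityEquation} into a Monge-Amp\`ere equation of the form
\[(\hat\omega_t + i\ddb\psi)^n = C(t)\, e^{(t+1)\psi/t}\, \omega_{OT}^n.\]
The positivity of $\hat\omega_t$ together with the characterization \eqref{Intro:MaximalExistenceTime} immediately yields $T = \infty$. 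The two-sided Ricci bound then follows from rewriting the normalized equation as $-\Ric\omega = \bigl((t+1)\omega - \omega_0\bigr)/t$ once the two-sided comparison $C^{-1}\omega_{OT}\le\omega\le C\omega_{OT}$ has been established.

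Next I would apply the maximum principle to $\psi$ in the Monge-Amp\`ere equation, exploiting the solvable-group invariance of $\omega_{OT}$ and the fact that $-\Ric\omega_{OT}$ pulls back from the base torus, to obtain a uniform bound $\|\psi\|_{L^\infty}\le C$. The equation then forces $i\ddb\psi\to 0$ in $C^0$ as $t\to\infty$, and combined with $\hat\omega_t\to -\Ric\omega_{OT}$ this gives the $C^0$ convergence \eqref{Intro:eqnOTC2}. The Gromov-Hausdorff convergence to $(\mathbb{T}^{n-1},d)$ then follows by standard arguments (as in \cite{sw20}) since $-\Ric\omega_{OT}$ is the pullback of a flat metric on the base.

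For smooth convergence under the weakly parallel hypothesis, I would adapt the rescaling argument of Angella-Tosatti \cite{at21}: pass to a suitable cover of $X$, rescale the metric so that the degenerating direction is ``unfolded,'' and verify that on balls of fixed radius the rescaled metrics $\chi(t)$ satisfy the two-sided bound \eqref{Intro:C3_assumption} required in Theorem \ref{Intro:ThmC3}. The local Calabi estimate \eqref{Intro:C3Estimate} together with Corollary \ref{Intro:CorCk} then produces uniform higher-order bounds; here the explicit dependence of the constants recorded in the remark after Theorem \ref{Intro:ThmC3} is crucial to ensure the bounds survive unwinding the rescaling. The main obstacle is the $C^0$ convergence $i\ddb\psi\to 0$: in the non-K\"ahler, collapsing setting this does not follow from standard pluripotential estimates, and will require genuine use of the OT manifold's algebraic structure --- in particular, the foliation by leaves on which $-\Ric\omega_{OT}$ vanishes --- to reduce to a fiberwise problem where the estimate can be closed.
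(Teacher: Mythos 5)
Your overall architecture tracks the paper's: you build a degenerating reference family, reduce \eqref{Intro:NormalizedContinuityEquation} to a scalar Monge--Amp\`ere equation, run a maximum principle for a $C^0$ bound, and rescale on the cover $\mathbb H^{n-1}\times\mathbb C$ to invoke Theorem \ref{Intro:ThmC3} for higher order estimates under the weakly-parallel hypothesis. The claim $T=\infty$ via \eqref{Intro:MaximalExistenceTime} and the Ricci bounds via $\Ric\omega = -\bigl((t+1)\omega-\omega_0\bigr)/t$ once uniform equivalence is known are both correct and essentially what the paper does.

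However there is a genuine gap at the heart of the argument, and you flag it yourself without closing it: you pass from a uniform $L^\infty$ bound on the potential $\psi$ to the statement ``the equation then forces $i\ddb\psi\to 0$ in $C^0$.'' This does not follow; $C^0$ convergence of the metric is a second-order estimate on $\psi$. The paper's proof of this step is the entire content of Propositions \ref{OT:C2:Prop1} and \ref{OT:C2:Prop2}, and it does not proceed by ``reducing to a fiberwise problem'' as you suggest. Instead: (i) the $C^0$ estimate is sharpened to $|\varphi|\le C/(t+1)$ (Proposition \ref{OT:C0:Prop}), a decay rate your $\|\psi\|_{L^\infty}\le C$ misses and which is essential for what follows; (ii) a second-order maximum principle is applied to the quantity $\log\tr_{\hat\omega}\omega - A\sqrt{t+1}\,\varphi + (t+1)\varphi^2$, where the strongly-flat-along-leaves property \eqref{OT:StronglyFlatAlongTheLeaves1} is used to downgrade the curvature term in \eqref{Prelim:LaplacianLogTrace} from order $t+1$ to order $\sqrt{t+1}$ (this is \eqref{OT:C2:Computation2}) --- without this gain the maximum principle does not close; (iii) the resulting uniform equivalence $C^{-1}\hat\omega\le\omega\le C\hat\omega$ is then refined to $(1\pm\varepsilon(t))\hat\omega$ via Maclaurin-type inequalities and a second maximum-principle argument. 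None of this is present in your outline, and the plan you sketch for the $C^0$ step (solvable-group invariance, reduction to a fiber problem) is not what is needed.

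Two smaller points. First, you take the reference family $\hat\omega_t = \tfrac{1}{t+1}\omega_0 + \tfrac{t}{t+1}\alpha$, which includes $i\ddb\varphi_0/(t+1)$; the paper instead uses $\hat\omega = (\omega_{OT}+t\alpha)/(t+1)$, which has explicitly computable torsion and curvature --- a convenience that matters when proving \eqref{OT:C2:Computation2} and \eqref{OT:C2:Computation3}. Second, for the rescaling argument you correctly identify that the stretched metrics must satisfy the two-sided bound \eqref{Intro:C3_assumption} on fixed balls, but the point of the weakly-parallel hypothesis is more specific: it guarantees that $\|\hat\nabla_E\lambda_t^*\omega_0\|_{\omega_E}/t$ stays bounded, which is needed because the constant in Theorem \ref{Intro:ThmC3} depends on the first derivative of the initial metric $\chi_0=\lambda_t^*\omega_0$ and that derivative would otherwise blow up under the stretching $\lambda_t(z,w)=(z,\sqrt{t+1}\,w)$.
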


\noindent For the Chern-Ricci flow, Gromov-Hausdorff convergence on OT manifolds \cite{zheng15} and smooth convergence on Inoue surfaces \cite{at21} are known to hold. However, the Ricci curvature bounds are not known in that setting. In addition, the above theorem strengthens the convergence on Inoue surfaces for the continuity equation obtained in previous work of the second and third authors \cite{ss22} to smooth convergence. Due to the nature of the continuity equation and its explicit dependence on the initial metric, we need to impose a further condition on the initial potential in order to obtain smooth convergence that would not be needed in the parabolic setting. 

This paper is organized as follows. In Section \ref{Prelim}, we establish the notation for the paper and provide some identities that will be used in the later sections. In Section \ref{C3}, we prove the local Calabi estimates of Theorem \ref{Intro:ThmC3}. Using these estimates, we prove Theorem \ref{Intro:ThmBlowup} in Section \ref{Blowup}; that the scalar curvature must blow up at a finite-time singularity. Finally, in Section \ref{OT}, we provide a proof of Theorem \ref{Intro:ThmOT} showing smooth convergence of solutions of the continuity equation on Oeljeklaus-Toma manifolds to a non-negative $(1,1)$-form and Gromov-Hausdorff convergence to $\mathbb{T}^{n-1}$. 

\section{Preliminaries}\label{Prelim}

\noindent In this section, we will establish the notation used in the paper and cover some identities that will be needed in subsequent sections. Given a Hermitian metric
\begin{align*}
    \omega = g_{\bar kj}\,idz^j\wedge d\bar z^k
\end{align*} its Chern connection $\nabla = \partial + \Gamma$ is defined by
\begin{align*}
    \Gamma^p_{jr} & = g^{p\bar q}\partial_jg_{\bar qr} \\
    \Gamma^p_{\bar kr} & = 0.
\end{align*}
We define the torsion
\begin{align*}
    T^p_{jr} = \Gamma^p_{jr} - \Gamma^p_{rj}
\end{align*}
the torsion $1$-form
\begin{align*}
\tau_j = T^p_{pj}
\end{align*}
and the curvature
\begin{align*}
    R_{\bar kj}{}^p{}_r = -\partial_{\bar k}\Gamma^p_{jr}.
\end{align*}
The first and second Chern Ricci curvatures are respectively
\begin{align*}
    R_{\bar kj} & = R_{\bar kj}{}^p{}_p \\
    R'_{\bar kj} & = R^p{}_{p\bar kj}.
\end{align*}
We also have the following well-known identities
\begin{align*}
	R_{\bar j i \bar l k} - R_{\bar jk \bar l i} &= - \nabla_{\bar j} T_{i k \bar l} \\
	R_{\bar j i\bar l k} - R_{\bar l i \bar j k} &= -\nabla_i T_{\jbar  \lbar k} \\
	R_{\bar j i \bar l k} - R_{\bar l k \bar j i} 
		&= - \nabla_{\bar j} T_{i k \bar l} - \nabla_k T_{\bar j  \bar l i} 
		= - \nabla_i T_{\bar j \bar l k} - \nabla_{\bar l} T_{i k \bar j} \\
	\nabla_p R_{\bar j i \bar l k} - \nabla_i R_{\bar j p \bar l k} 
		&= - T_{p i}{}^r R_{\bar j r \bar l k} \\
	\nabla_{\bar q} R_{\bar j i \bar l k} - \nabla_{\bar j} R_{\bar q i \bar l k} &= - T_{\bar q \bar j}{}^{\bar s} R_{\bar s i \bar l k}.
\end{align*}
We define the trace of a real $(1,1)$-form $\alpha = \alpha_{\bar kj}\,idz^j\wedge d\bar z^k$ with respect to $\omega$ by
\begin{align*}
    \tr_\omega \alpha = g^{j\bar k}\alpha_{\bar kj}.
\end{align*}
If $\chi = \chi_{\bar kj}\,idz^j\wedge d\bar z^k$ is another Hermitian metric with $d(\omega - \chi) = 0$, by a computation due to Cherrier \cite{cherrier87}
\begin{align}\label{Prelim:LaplacianTrace}
\Delta_\omega\tr_\chi\omega
& = -\chi^{p\bar q}R_{\bar qp}(\omega) + g^{j\bar k}R_{\bar kj}{}^{p\bar q}g_{\bar qp} \\
& \ \ \ \, + g^{j\bar k}R^p{}_{p\bar kj} - g^{j\bar k}R_{\bar kj}{}^p{}_p - g^{j\bar k}\chi^{p\bar q}\chi_{\bar sr}T^r_{pj}\bar T^s_{qk} \nonumber \\
& \ \ \ \, + g^{j\bar k}\chi^{p\bar q}g_{\bar sr}\Phi^r_{pj}\bar\Phi^s_{qk} \nonumber
\end{align}
and 
\begin{align}\label{Prelim:LaplacianLogTrace}
\Delta_\omega\log\tr_\chi\omega
& = \frac{1}{\tr_\chi\omega}\bigg\{-\chi^{p\bar q}R_{\bar qp}(\omega) + g^{j\bar k}R_{\bar kj}{}^{p\bar q}g_{\bar qp} \\
& \ \ \ \ \ \ \ \ \ \ \ \ \, + g^{j\bar k}R^p{}_{p\bar kj} - g^{j\bar k}R_{\bar kj}{}^p{}_p - g^{j\bar k}\chi^{p\bar q}\chi_{\bar sr}T^r_{pj}\bar T^s_{qk} \nonumber \\
& \ \ \ \ \ \ \ \ \ \ \ \ \, + 2\Re g^{j\bar k}\frac{\partial_j\tr_\chi\omega}{\tr_\chi\omega}\bar \tau_k + \frac{1}{\tr_\chi\omega}g^{j\bar k}\tau_j\bar \tau_k \nonumber \\
& \ \ \ \ \ \ \ \ \ \ \ \ \, + g^{j\bar k}\chi^{p\bar q}g_{\bar sr}\Phi^r_{pj}\bar\Phi^s_{qk} - \frac{1}{\tr_\chi\omega}g^{j\bar k}\Psi_j\bar\Psi_k\bigg\} \nonumber
\end{align}
where $R$ and $T$ are the curvature and torsion of $\chi$, and $\Phi^r_{pj} = g^{r\bar s} \nabla_pg_{\bar sj} + T^r_{pj}$ and $\Psi_j = \partial_j\tr_\chi\omega + \tau_j$. We have in \eqref{Prelim:LaplacianTrace}
\begin{equation}
g^{j\bar k}\chi^{p\bar q}g_{\bar sr}\Phi^r_{pj}\bar\Phi^s_{qk} \geq 0
\end{equation}
and by the methods used in the proof of \cite[Theorem 2.1]{tw10-2} (see also \cite[Theorem 3]{smith20}) we have in \eqref{Prelim:LaplacianLogTrace}
\begin{equation}\label{Prelim:AubinYau}
g^{j\bar k}\chi^{p\bar q}g_{\bar sr}\Phi^r_{pj}\bar\Phi^s_{qk} - \frac{1}{\tr_\chi\omega}g^{j\bar k}\Psi_j\bar\Psi_k \geq 0.
\end{equation}
From the continuity equation \eqref{Intro:ContinuityEquation} and \eqref{Intro:NormalizedContinuityEquation}, it is immediate that $\omega$ and $\omega_0$ differ by $\Ric\omega$ which is closed. From this, it follows that 
\begin{align}
    T_{ij\bar{k}}=(T_0)_{ij\bar{k}},\label{Prelim:Torsion}
\end{align}
where $T_{ij\bar{k}}=g_{\bar{k}r}T^r_{ij}$ and so torsion terms of the solution metric can be expressed in terms of torsion terms of the initial metric. We note that throughout this paper, the constant $C$ may change from line to line.

\section{Local Calabi estimates}\label{C3}

\subsection{$C^3$ estimate}\label{C3:C3}

In this subsection, we will prove Theorem \ref{Intro:ThmC3}. The proof follows in a similar way to Section 3 of \cite{sw13} except that we are working with an elliptic equation instead of a parabolic equation.

\begin{proof}
Let $\Delta$ and $\nabla$ be the Laplacian and covariant derivative with respect to $\chi$ and let $\hat\nabla$ be the covariant derivative with respect to $\hat\chi.$ It is important that $\chi$ and $\hat\chi$ differ by a closed form to ensure that we can control torsion and derivatives of torsion terms of $\chi$ in terms of those of $\hat\chi$ using Equation \eqref{Prelim:Torsion}. We prove Theorem \ref{Intro:ThmC3} by applying a maximum principle argument to the Calabi-type quantity \cite{calabi58,yau78} (see \cite{pss07} in the parabolic K\"ahler case)
\begin{align*}
	S := | \Psi |^2 = | \hat \nabla \chi |^2,
\end{align*}
where the norm is taken with respect to $\chi$. 
Firstly, we compute (see Section 3 of \cite{sw13} for details)
\begin{align}
    \label{DeltaS}
    \begin{split}
	\Delta S ={} & |\overline\nabla \Psi|^2 + |\nabla \Psi|^2 
		+ 2 \Re \left( 	( \Delta \Psi_{ij}{}^k ) \Psi^{ij}{}_k  \right) \\
		&+ (R_p{}^p{}_r{}^i \Psi^{rj}{}_k  
			+ R_p{}^p{}_r{}^j \Psi^{ir}{}_k  
			- R_p{}^p{}_k{}^r \Psi^{ij}{}_r  ) \Psi_{ij}{}^k .
    \end{split}
\end{align}
For the third term, we can use the identity from Equation (3.2) of \cite{sw13}
\begin{align*}
	\Delta \Psi_{ij}{}^k = - \nabla^{\bar q} R_{ \bar q i}{}^k {}_j
		+ \nabla^{\bar q} \hat{R}_{\bar q i}{}^k{}_j.
\label{C3:LaplacianPsi}
\end{align*}
Using the fact that $\chi(t)$ solves the continuity equation \eqref{Intro:ContinuityEquation} in a neighborhood of $B_r$, we have the following 
\begin{equation}\label{Ricci_identity}
R^k{}_j = -\frac{\chi^{k\bar{q}}\chi_{q\bar{j}}-\chi^{k\bar{q}}(\chi_0)_{\bar{q}j}}{t}, 
\end{equation}
\begin{align*}
\nabla_i R^k{}_j &= \frac{1}{t} \chi^{k\bar{q}}\nabla_i (\chi_0)_{\bar{q}j}\\
&= \frac{1}{t}(\chi^{k\bar{q}}\hat\nabla_i (\chi_0)_{\bar{q}j}-\chi^{k\bar{q}}(\chi_0)_{\bar{q}r}\Psi^r_{ij}).
\end{align*}
For the normalized continuity equation $\eqref{Intro:NormalizedContinuityEquation}$, we have a coefficient of $t+1$ in front of the $\chi$ terms which leads to the same bounds.
Now we can bound the term $\Delta\Psi_{ij}{}^k$ from below as follows
\begin{align*}
	\Delta \Psi_{ij}{}^k 
		=& 	- \nabla^\qbar R_{\qbar i}{}^k{}_j
			+ \nabla^\qbar \hat{R}_{\qbar i}{}^k{}_j \\
		=& \left( - \nabla_i R^k{}_j{}^p{}_p + T_{pi}{}^r R^p{}_r{}^k{}_j 
			+ \nabla_i \nabla_j T^{p k}{}_p + \nabla_i \nabla^p T_{pj}{}^k \right)
			 + \nabla^\qbar \hat{R}_{\qbar i}{}^k{}_j \\
		=&  -\frac{1}{t}(\chi^{k\bar{q}}\hat\nabla_i (\chi_0)_{\bar{q}j}-\chi^{k\bar{q}}(\chi_0)_{\bar{q}r}\Psi^r_{ij}) + T_{pi}{}^r R^p{}_r{}^k{}_j 
			+ \nabla_i \nabla_j T^{p k}{}_p\\
   & \ \ \ \ \ \ + \nabla_i \nabla^p T_{pj}{}^k 
			 + \nabla^\qbar \hat{R}_{\qbar i}{}^k{}_j \\
            \geq& -C S - C|\overline \nabla \Psi| - C|\nabla \Psi|  - C,
\end{align*}
where the last inequality follows from the bounds obtained in Equations (3.5)-(3.7) of \cite{sw13} and the assumption that $t\ge \varepsilon$. In order to bound the last term appearing on the right-hand side of \eqref{DeltaS}, we apply the following identity to convert the second Chern Ricci terms into first Chern Ricci terms
\begin{align*}
    R^p{}_p{}^i{}_r &= R^p{}_r{}^i{}_p - \nabla^p T_{pr}{}^i\\
    &= R^i{}_r{}^p{}_p - \nabla^pT_{pi}{}^i - \nabla_rT^{pi}{}_p\\
    &= -\frac{1}{t}(\chi^{i\bar{q}}\chi_{\bar{q}r}-\chi^{i\bar{q}}(\chi_0)_{\bar{q}r})- \nabla^pT_{pi}{}^i - \nabla_rT^{pi}{}_p,
\end{align*}
where the last equality follows from \eqref{Ricci_identity}. From the lower bound from Equation (3.5) of \cite{sw13}, we have that
\begin{align*}
    (R^p{}_p{}^i{}_r \Psi^{rj}{}_k + R^p{}_p{}^j{}_r \Psi^{ir}{}_k-R^p{}_p{}^r{}_k\Psi^{ij}{}_r)\Psi_{ij}{}^k\ge -C(S^{3/2}+1).
\end{align*}
Putting this together into \eqref{DeltaS} with an application of Young's inequality, we arrive at
\begin{align*}
    \Delta S \ge \frac{1}{2}(|\nabla \Psi|^2+|\bar\nabla\Psi|^2)-CS^{{3/2}}-C.
\end{align*}
We would like to show a uniform bound on our quantity $S$ on $\overline{B_{r/2}}$. Choose a smooth cutoff function $\rho\leq 1$ which is identically $1$ on $\bar B_{r/2}$ and supported on $B_r$ with
\begin{align*}
|\nabla\rho|^2 \leq \frac{C}{r^2}, \ \ \ |\Delta \rho| \leq \frac{C}{r^2}.
\end{align*}
By our assumption in the theorem on the uniform equivalence of $\hat\chi$ and $\chi$, we may take $N$ sufficiently large so that
\begin{align*}
    \frac{N}{2}\le N-\tr_{\hat\chi}\chi\le N.
\end{align*}
We will apply the maximum principle to the quantity 
\begin{align}
    f=\rho^2 \frac{S}{N-\tr_{\hat\chi}\chi}+A\tr_{\hat\chi}\chi.
\end{align}
Suppose that the maximum of $f$ on $\overline{B_r}$ is attained at a point $x_0.$ Let us first consider the case where $x_0$ is an interior point of $B_r$. We may assume that at $x_0$, $S\ge 1$ since otherwise we would already have our desired uniform bound on $S$. Let us compute at $x_0,$
\begin{align*}
    \Delta \left(\frac{\rho^2S}{N-\tr_{\hat\chi}\chi}\right) &=  \Delta \rho^2 \frac{S}{N-\tr_{\hat\chi}\chi} + \rho^2 \Delta \frac{S}{N-\tr_{\hat\chi}\chi} + 2\Re \left\langle \nabla \rho^2 , \nabla \frac{S}{N-\tr_{\hat\chi}\chi}\right\rangle  \\
    &=  \Delta \rho^2 \frac{S}{N-\tr_{\hat\chi}\chi} + \frac{\rho^2}{N-\tr_{\hat\chi}\chi}\Delta S + 2\frac{\rho^2}{(N-\tr_{\hat\chi}\chi)^2}\Re\langle \nabla S, \nabla \tr_{\hat\chi}\chi\rangle \\
    & \ \ \ \ \ \ + \rho^2 S \frac{\Delta \tr_{\hat\chi}\chi}{N-\tr_{\hat\chi}\chi}+ 2\rho^2 S\frac{|\nabla \tr_{\hat\chi}\chi|^2}{(N-\tr_{\hat\chi}\chi)^3} + 2\Re \left\langle \nabla \rho^2 , \frac{\nabla S}{N-\tr_{\hat\chi}\chi}\right\rangle\\
    & \ \ \ \ \ \ + 2\Re \left\langle \nabla \rho^2 , \frac{S\nabla \tr_{\hat\chi}\chi}{(N-\tr_{\hat\chi}\chi)^2}\right\rangle
\end{align*}
We know that at a max of $f$, we have that
\begin{align*}
    0 &= \nabla\left(\frac{\rho^2 S}{N-\tr_{\hat\chi}\chi}+A\tr_{\hat\chi}\chi\right) \\
      &= \nabla\rho^2 \frac{S}{N-\tr_{\hat\chi}\chi} + \rho^2 \frac{\nabla S}{N-\tr_{\hat\chi}\chi}+\rho^2\frac{S \nabla \tr_{\hat\chi}\chi}{(N-\tr_{\hat\chi}\chi)^2}+A\nabla \tr_{\hat\chi}\chi.
\end{align*}
So we get that
\begin{align*}
    \Delta \left(\frac{\rho^2 S}{N-\tr_{\hat\chi}\chi}\right) &= \Delta \rho^2 \frac{S}{N-\tr_{\hat\chi}\chi} + \frac{\rho^2}{N-\tr_{\hat\chi}\chi}\Delta S \\
    & \ \ \ \ \ \ + \rho^2 S \frac{\Delta \tr_{\hat\chi}\chi}{N-\tr_{\hat\chi}\chi} + 2\Re \langle \nabla \rho^2 ,  \frac{\nabla S}{N-\tr_{\hat\chi}\chi}\rangle\\
    & \ \ \ \ \ \ - 2A\frac{|\nabla\tr_{\hat\chi}\chi|^2}{N-\tr_{\hat\chi}\chi}.
\end{align*}
By an application of Young's inequality, it can be shown that
\begin{align*}
    \Delta \left(\frac{\rho^2S}{N-\tr_{\hat\chi}\chi}\right) \ge -\frac{C}{r^2}S -\frac{CAS}{N} ,
\end{align*}
where we used the fact that
\begin{align*}
    |\nabla S|^2 \le 2S(|\overline\nabla \Psi|^2+|\nabla\Psi|^2)
\end{align*}
and
\begin{align*}
    \Delta \tr_{\hat\chi}\chi \ge \frac{1}{2}S - C.
\end{align*}
The above inequality follows from the fact that $\chi$ solves the continuity equation, that torsion terms of $\chi$ can be written in terms of torsion terms of $\hat\chi$ as per Equation \eqref{Prelim:Torsion}, and applying Young's inequality to the last term in \eqref{Prelim:LaplacianTrace}.
Together this gives us that
\begin{align*}
    \Delta f \ge -\frac{C}{r^2}S-\frac{CAS}{N}+\frac{AS}{2}-CA.
\end{align*}
Choose $N$ big enough so that $-\frac{CAS}{N}+\frac{AS}{2}\ge \frac{AS}{4}$. Then pick $A = 4(\frac{C}{r^2} + 1)$.
At a maximum of $f$, we have that

\begin{align*}
    0 &\geq \Delta \left(\rho^2 \frac{S}{K-\tr_{\hat\chi}\chi} + A\tr_{\hat\chi}\chi\right) \\
      &\geq \left(\frac{A}{4}- \frac{C}{r^2}\right) S - CA \\
      &= S -CA
\end{align*}
This implies that $S(x_0) \leq CA$, therefore 
\begin{align*}
    \|S\|_{\overline {B_{r/2}}} \leq \frac{CNA}{\rho^2}\le \frac{C}{r^2}.
\end{align*}
This bound is independent of $T$. We are left to deal with the case where $x_0 \in \partial B_r$. In this case, $\rho=0$ and so we have $f(x_0)\le A\tr_{\hat\chi}\chi(x_0)\le \frac{C}{r^2}$ which implies
\begin{align*}
    \rho^2 \frac{S}{K-\tr_{\hat\chi}\chi} + A \tr_{\hat\chi}\chi 
    &\leq \frac{C}{r^2}.
\end{align*}
This gives us the desired bound 
\begin{align*}
    \|S\|_{\overline{B_{r/2}}} &\leq \frac{CN}{r^2},
\end{align*}
which is independent of $T$.
\end{proof} 

\subsection{Higher order estimates}\label{C3:Ck}

In this subsection we prove Corollary \ref{Intro:CorCk} by a standard bootstrapping argument. For simplicity we will only present the proof for solutions to equation \eqref{Intro:ContinuityEquation}.

\begin{proof}

Modulo the image of $i\ddb$, Equation \eqref{Intro:ContinuityEquation} says $[\omega - \omega_0] = -tc_1(X)$; therefore $\omega = \omega_0 - t\Ric\omega_0 + i\ddb\varphi$ for some $\varphi$ with $\varphi(0) = 0$. Thus
\begin{align*}
0
& = \omega_0 - t\Ric\omega - \omega \\
& = -t(\Ric\omega - \Ric\omega_0) - i\ddb\varphi \\
& = i\ddb\left(t\log\frac{\omega^n}{\omega_0^n} - \varphi\right).
\end{align*}
By normalizing $\varphi$ appropriately we may assume that $\varphi$ solves the following scalar equation
\begin{equation}\label{C3:Ck:ScalarEquation}
t\log\frac{\omega^n}{\omega_0^n} - \varphi = 0.
\end{equation}
Differentiating \eqref{C3:Ck:ScalarEquation} yields a linear equation
\begin{equation}\label{C3:Ck:LinearEquation}
t\Delta\varphi_\ell - \varphi_\ell = f
\end{equation}
for $\varphi_\ell = \partial_\ell\varphi$ where
\begin{equation*}
f = t^2g^{j\bar k}\partial_\ell R^0_{\bar kj}.
\end{equation*}
By the assumption $C^{-1}\omega_0\leq\omega\leq C\omega_0$ we may apply Theorem \ref{Intro:ThmC3} to the equation \eqref{Intro:ContinuityEquation}. Consequently the equation \eqref{C3:Ck:LinearEquation} has $C^{0,\alpha}$ coefficients. Schauder estimates imply that $\varphi_\ell$ belongs to $C^{2,\alpha}$, so $\varphi$ belongs to $C^{3,\alpha}$. But then \eqref{C3:Ck:LinearEquation} has $C^{1,\alpha}$ coefficients so we may reapply the argument at a higher degree of regularity. By continuing this process we obtain $C^k$ estimates of all orders for $\varphi$, which completes the proof of Corollary \ref{Intro:CorCk}.
\end{proof}

\section{Blow-up of the scalar curvature at a finite time singularity}\label{Blowup}

\noindent In this section we prove that the Chern scalar curvature of a solution to the continuity equation must blow up at a finite-time singularity.

\begin{proof}[Proof of Theorem \ref{Intro:ThmBlowup}]
For a contradiction, assume that $T < \infty$ and $R \leq C$ and that $\omega$ solves \eqref{Intro:ContinuityEquation}; a similar proof applies for \eqref{Intro:NormalizedContinuityEquation}. Contracting \eqref{Intro:ContinuityEquation} with $\omega$ yields
\begin{equation}\label{Blowup:InverseTraceEstimate}
\tr_\omega\omega_0 = n + tR \leq C.
\end{equation}
By the same argument as in subsection \ref{C3:Ck} we may assume that $\omega = \omega_0 - t\Ric\omega_0 + i\ddb\varphi$ where $\varphi$ solves the following scalar equation
\begin{equation}\label{Blowup:ScalarEquation}
t\log\frac{\omega^n}{\omega_0^n} - \varphi = 0
\end{equation}
with $\varphi(0) = 0$.

A maximum principle argument gives $\varphi\leq C$ and hence $\omega^n/\omega_0^n\leq e^{C/t}$. In combination with \eqref{Blowup:InverseTraceEstimate} this gives
\begin{equation}
\tr_{\omega_0}\omega \leq C\frac{\left(\tr_\omega\omega_0\right)^{n-1}}{\omega_0^n/\omega^n} \leq C\label{TraceFlip}
\end{equation}
for $t\geq T/2$. The estimate \eqref{Blowup:InverseTraceEstimate} together with the above estimate imply
\begin{equation*}
C^{-1}\omega_0\leq\omega\leq C\omega_0.
\end{equation*}
so we may apply Theorem \ref{Intro:ThmC3} and Corollary \ref{Intro:CorCk} to obtain $C^k$ estimates of all orders. By the Arzela-Ascoli theorem Equation \eqref{Intro:ContinuityEquation} can also be solved at $t = T$, and by the ellipticity of \eqref{Blowup:ScalarEquation} the equation can furthermore be solved at $t = T + \varepsilon$. This contradicts the maximality of $T$, proving the desired result.
\end{proof}

\section{Oeljeklaus-Toma manifolds}\label{OT}

\noindent In this section we will establish estimates which together prove Theorem \ref{Intro:ThmOT}. We start by reviewing some properties of Oeljeklaus-Toma manifolds, referring to \cite{ot05} for the details of their construction.

An Oeljeklaus-Toma manifold of complex dimension $n$ is a certain quotient of $\mathbb H^{n-1}\times\mathbb C$, on which we use the coordinates $z_1,\dotsc,z_{n-1}\in\mathbb{H}^{n-1}$ and $w\in\mathbb{C}$ and define $y_j = \Im z_j > 0$.

The $(1,1)$-forms
\begin{equation*}
\alpha = \frac{1}{4y_j^2}idz^j\wedge d\bar z^j
\end{equation*}
\begin{equation*}
\beta = y_1\cdots y_{n-1}idw\wedge d\bar w
\end{equation*}
\begin{equation*}
\gamma = \frac{1}{4y_jy_k}idz^j\wedge d\bar z^k
\end{equation*}
are well-defined. Setting
\begin{equation*}
\Omega = \alpha^{n-1}\wedge\beta
\end{equation*}
it follows that
\begin{equation*}
i\ddb\log\Omega = \alpha
\end{equation*}
and this shows that $-\alpha\in c_1(X)$.

A Hermitian metric $\hat\omega_0$ is called strongly flat along the leaves \cite{at21} if there is a $c > 0$ so that the following condition is satisfied:
\begin{equation}\label{OT:StronglyFlatAlongTheLeaves1}
\alpha^{n-1}\wedge\hat\omega_0 = c\Omega.
\end{equation}
Note that it is always possible to satisfy this property after a conformal change \cite[Lemma 2.2]{zheng15}:
\begin{equation*}
\hat\omega_0 \mapsto \frac{c\Omega}{\alpha^{n-1}\wedge\hat\omega_0}\hat\omega_0.
\end{equation*}
The property \eqref{OT:StronglyFlatAlongTheLeaves1} is equivalent to
\begin{equation}\label{OT:StronglyFlatAlongTheLeaves2}
\hat g^0_{\bar ww} = cy_1\cdots y_{n-1}.
\end{equation}
In particular, the Oeljeklaus-Toma metric
\begin{align}\label{OT:OT}
\omega_{OT} = \alpha + \beta + \gamma
\end{align}
satisfies \eqref{OT:StronglyFlatAlongTheLeaves1} and
\begin{align*}
\Ric(\omega_{OT}) = -\alpha.
\end{align*}
We remark that $\hat\omega = \omega_{OT} - t\Ric\omega_{OT}$ also satisfies $\Ric\hat\omega = -\alpha$, and is therefore the explicit solution of the normalized continuity equation \eqref{Intro:NormalizedContinuityEquation} starting at $\omega_{OT}$.

In the following subsections we will prove a priori estimates for a solution $\omega$ of \eqref{Intro:NormalizedContinuityEquation} with $\omega_0 = \hat\omega_0 + i\ddb\varphi_0$ and $\hat\omega_0$ satisfying \eqref{OT:StronglyFlatAlongTheLeaves1}. Modulo the image of $i\ddb$, \eqref{Intro:NormalizedContinuityEquation} says $(t+1)[\omega] = [\omega_0] - tc_1(X)$; therefore $\omega = \hat\omega + i\ddb\varphi$ for some $\varphi$ with $\varphi(0) = \varphi_0$ where
\begin{equation}\label{OT:OmegaHat}
\hat\omega = (\hat\omega_0 + t\alpha)/(t+1).
\end{equation}

\subsection{$C^0$ estimate}\label{OT:C0}

We begin by proving a $C^0$ estimate under a suitable normalization of the potential, and this implies a determinant estimate.

\begin{proposition}\label{OT:C0:Prop}
Let $\omega = \hat\omega + i\partial\bar\partial\varphi$ solve the normalized continuity equation \eqref{Intro:NormalizedContinuityEquation} with $\hat\omega_0$ satisfying \eqref{OT:StronglyFlatAlongTheLeaves1}. If $\varphi$ is normalized so that
\begin{equation}\label{OT:C0:Normalization}
\int_Xe^\frac{(t+1)\varphi - \varphi_0}{t}\Omega = \frac{(t+1)^n}{cnt^{n-1}}\int_X\omega^n
\end{equation}
then
\begin{equation}\label{OT:C0:PhiEstimate}
|\varphi| \leq C/(t+1)
\end{equation}
and
\begin{equation}\label{OT:C0:DeterminantEstimate}
e^{-C/t} \leq \frac{\omega^n}{\hat\omega^n} \leq e^{C/t}.
\end{equation}
\end{proposition}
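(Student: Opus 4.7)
The plan is to reduce the normalized continuity equation to a scalar complex Monge-Amp\`ere-type equation and then apply the maximum principle. Starting from $\Ric\omega = -i\ddb\log\omega^n$ and $\alpha = i\ddb\log\Omega$, substituting $\omega = \hat\omega + i\ddb\varphi$ and $\omega_0 = \hat\omega_0+i\ddb\varphi_0$ into \eqref{Intro:NormalizedContinuityEquation} (and using $(t+1)\hat\omega = \hat\omega_0+t\alpha$) produces
\begin{equation*}
  i\ddb\bigl[\,t\log\Omega + (t+1)\varphi - \varphi_0 - t\log\omega^n\,\bigr] = 0.
\end{equation*}
Since pluriharmonic functions on the compact manifold $X$ are constant, the bracket equals some constant $-C_t$. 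Integrating the resulting pointwise identity over $X$ and matching with \eqref{OT:C0:Normalization} forces $e^{C_t/t} = cnt^{n-1}/(t+1)^n$, giving the scalar equation
\begin{equation*}
  \omega^n = \Omega\cdot\frac{cnt^{n-1}}{(t+1)^n}\, e^{\frac{(t+1)\varphi-\varphi_0}{t}}.
\end{equation*}

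Next I would expand $\hat\omega^n = (\hat\omega_0+t\alpha)^n/(t+1)^n$ as a polynomial in $t$. The crucial algebraic fact is $\alpha^n = 0$ (since $\alpha$ has no $dw$ or $d\bar w$ component, hence rank $n-1$), killing the $t^n$ term; the remaining polynomial has degree $n-1$ with leading coefficient $n\alpha^{n-1}\wedge\hat\omega_0 = cn\,\Omega$ by the strongly flat hypothesis \eqref{OT:StronglyFlatAlongTheLeaves1}, while all intermediate terms are non-negative bounded multiples of $\Omega$. Writing $P(t):=(\hat\omega_0+t\alpha)^n/\Omega$, one obtains the pointwise bounds $cnt^{n-1}\leq P(t)\leq cnt^{n-1}+C\sum_{k=0}^{n-2}t^k$ for $t>0$, and hence $|\log(P(t)/(cnt^{n-1}))|\leq C/t$ uniformly on $(0,\infty)$ --- for $t\geq 1$ the ratio is $1+O(1/t)$, while for $t\leq 1$ the logarithm blows up at most like $(n-1)\log(1/t)$, whose product with $t$ is bounded since $t\log(1/t)\to 0$ as $t\to 0^+$.

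The $C^0$ estimate \eqref{OT:C0:PhiEstimate} then follows by a maximum principle argument. At a maximum point $x_0$ of $\varphi$, $i\ddb\varphi(x_0)\leq 0$ forces $\omega^n(x_0)\leq\hat\omega^n(x_0)$; substituting into the scalar equation and taking logarithms yields $(t+1)\varphi(x_0) - \varphi_0(x_0) \leq t\log(P(t,x_0)/(cnt^{n-1})) \leq C$, so $\varphi(x_0)\leq C/(t+1)$. The symmetric minimum argument, using $P(t)\geq cnt^{n-1}$ pointwise, gives $\varphi(x_1)\geq \varphi_0(x_1)/(t+1)\geq -C/(t+1)$ at a minimum point $x_1$, yielding $|\varphi|\leq C/(t+1)$. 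The determinant estimate \eqref{OT:C0:DeterminantEstimate} is then immediate from rewriting the scalar equation as $\omega^n/\hat\omega^n = (cnt^{n-1}/P(t))\,e^{((t+1)\varphi-\varphi_0)/t}$ and combining the $C^0$ bound on $\varphi$ with the uniform bounds on $P(t)$. The main obstacle is pinning down the precise coefficient $cnt^{n-1}/(t+1)^n$ in the scalar equation: this is where the strongly-flat-along-the-leaves hypothesis is essential, as it converts $n\alpha^{n-1}\wedge\hat\omega_0$ into exactly $cn\Omega$, which is then matched against the normalization \eqref{OT:C0:Normalization} to pin down the constant $C_t$ and absorb the degeneracy at $t\to 0^+$ into the factor $C/(t+1)$.
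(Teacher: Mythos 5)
Your proposal is correct and follows essentially the same route as the paper: reduce to the scalar equation via pluriharmonicity and the integral normalization, expand $(\hat\omega_0+t\alpha)^n$ using $\alpha^n=0$ and the strongly-flat hypothesis to isolate the leading term $cnt^{n-1}\Omega$, then apply the maximum principle to $\varphi$. The only cosmetic difference is that the paper restricts the polynomial bound to $t\geq 1$ (which suffices since the estimates are trivial on compact time intervals), whereas you spend an extra sentence verifying that $t\log(P(t)/(cnt^{n-1}))$ also stays bounded as $t\to 0^+$.
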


\begin{proof}
Equation \ref{Intro:NormalizedContinuityEquation} takes the form
\begin{align*}
0
& = (t+1)\omega - (\omega_0 - t\Ric\omega) \\
& = (t+1)(\hat\omega + i\ddb\varphi) - (\omega_0 - t\Ric\omega) \nonumber \\
& = (\omega_0 + t\alpha + i\ddb((t+1)\varphi - \varphi_0)) - (\omega_0 - t\Ric\omega) \nonumber \\
& = i\ddb\left((t+1)\varphi - \varphi_0 - t\log\frac{\omega^n}{\Omega}\right).
\end{align*}
The normalization \eqref{OT:C0:Normalization} implies that $\varphi$ solves the following scalar equation
\begin{equation}\label{OT:C0:ScalarEquation}
(t+1)\varphi - \varphi_0 - t\log\left(\frac{(t+1)^n}{cnt^{n-1}}\frac{\omega^n}{\Omega}\right) = 0.
\end{equation}
We now use the assumption \eqref{OT:StronglyFlatAlongTheLeaves1} and the fact that $\alpha^n = 0$ to compute
\begin{align}\label{OT:C0:TimeTerm}
\frac{(t+1)^n}{cnt^{n-1}}\frac{\hat\omega^n}{\Omega}
& = \frac{1}{cnt^{n-1}}\frac{(\hat\omega_0 + t\alpha)^n}{\Omega} \\
& = \frac{1}{cnt^{n-1}}\frac{n\hat\omega_0\wedge(t\alpha)^{n-1} + \frac{n(n-1)}{2}\hat\omega_0^2\wedge(t\alpha)^{n-2} + \cdots + \hat\omega_0^n}{\Omega} \nonumber \\
& = 1 + \frac{f_1(x)}{t} + \cdots + \frac{f_{n-1}(x)}{t^{n-1}} \nonumber
\end{align}
where
\begin{align*}
f_{k-1} = \binom{n}{k}\frac{\hat\omega_0^k\wedge\alpha^{n-k}}{cn\Omega} \geq 0.
\end{align*}
For $t\geq 1$ we have
\begin{align*}
1 \leq 1 + \frac{f_1(x)}{t} + \cdots + \frac{f_{n-1}(x)}{t^{n-1}} \leq 1 + \frac{f(x)}{t}
\end{align*}
where $f = f_1 + \cdots + f_{n-1}$. Substituting this into \eqref{OT:C0:TimeTerm} and using $t\log(1 + f(x)/t)\leq f(x)$ we obtain
\begin{align*}
0 \leq t\log\left(\frac{(t+1)^n}{cnt^{n-1}}\frac{\hat\omega^n}{\Omega}\right) \leq f(x)
\end{align*}
and thus by \eqref{OT:C0:ScalarEquation}
\begin{equation}\label{OT:C0:ScalarInequality}
0 \leq (t+1)\varphi - \varphi_0 - t\log\frac{\omega^n}{\hat\omega^n} \leq f(x).
\end{equation}
Now at a maximum or minimum point of $\varphi$ we have $\omega\leq\hat\omega$ and $\omega\geq\hat\omega$ respectively, so \eqref{OT:C0:ScalarInequality} implies
\begin{equation}\label{OT:C0:ExplicitPhiEstimate}
-\|\varphi_0\|_{C^0} \leq (t+1)\varphi \leq \|\varphi_0\|_{C^0} + \|f\|_{C^0}
\end{equation}
which completes the proof of the $C^0$ estimate on $\varphi$. Substituting \eqref{OT:C0:ExplicitPhiEstimate} back into \eqref{OT:C0:ScalarInequality} gives
\begin{align*}
-2\|\varphi_0\|_{C^0} - \|f\|_{C^0} \leq t\log\frac{\omega^n}{\hat\omega^n} \leq 2\|\varphi_0\|_{C^0} + \|f\|_{C^0}
\end{align*}
which proves \eqref{OT:C0:DeterminantEstimate}.
\end{proof}

\subsection{$C^2$ estimate}\label{OT:C2}

In this subsection, we begin by proving a $C^2$ estimate in Proposition \ref{OT:C2:Prop1} by applying a maximum principle argument and controlling the torsion terms. Using this, we prove a refined $C^2$ estimate in Proposition \ref{OT:C2:Prop2} from which we can obtain Gromov-Hausdorff convergence of $(X,\omega)$ to $(\mathbb{T}^{n-1},d)$ following the proof in \cite{zheng15}, where $d$ is a flat Riemannian metric on the torus $\mathbb{T}^{n-1}.$

\begin{proposition}\label{OT:C2:Prop1}
Let $\omega = \hat\omega + i\partial\bar\partial\varphi$ solve the normalized continuity equation \ref{Intro:NormalizedContinuityEquation} starting at $\omega_0 = \hat\omega_0 + i\partial\bar\partial\varphi_0$ where $\hat\omega_0$ satisfies condition \eqref{OT:StronglyFlatAlongTheLeaves1}. Then there exists a uniform constant $C$ such that
\begin{equation}\label{OT:C2:UniformEquivalence}
C^{-1}\hat\omega\leq\omega\leq C\hat\omega.
\end{equation}
\end{proposition}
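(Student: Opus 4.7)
The plan is to apply the maximum principle to the quantity
\[
Q = \log \tr_{\hat\omega}\omega - A\varphi
\]
for a suitably large constant $A$, following the standard Aubin-Yau approach adapted to the Hermitian setting. Since $\omega - \hat\omega = i\ddb\varphi$ is closed, the Cherrier-type identity \eqref{Prelim:LaplacianLogTrace} is available with $\chi = \hat\omega$, and the nonnegative combination \eqref{Prelim:AubinYau} can be discarded at the outset.

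The key computational step is to estimate the right-hand side of \eqref{Prelim:LaplacianLogTrace} from below. The Ricci contribution is eliminated using the normalized continuity equation, which rearranges to
\[
\Ric\omega = -\alpha + \tfrac{1}{t}i\ddb\varphi_0 - \tfrac{t+1}{t}i\ddb\varphi,
\]
so that $-\tr_{\hat\omega}\Ric\omega$ becomes a uniformly bounded function plus a multiple of $\tr_{\hat\omega}\omega - n$ that can be absorbed. The bisectional curvature of $\hat\omega$ and the torsion-squared term (which carries a bad sign) are controlled using the explicit formula $\hat\omega = (\hat\omega_0 + t\alpha)/(t+1)$ and the strongly flat along leaves condition \eqref{OT:StronglyFlatAlongTheLeaves1}; crucially, by \eqref{Prelim:Torsion} the torsion of $\omega$ with all indices lowered is pinned to that of $\omega_0$, so the torsion-squared contraction is bounded by $C\tr_\omega\hat\omega$ via Cauchy-Schwarz. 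The upshot should be an inequality of the form
\[
\Delta_\omega \log\tr_{\hat\omega}\omega \geq -C - C\tr_\omega\hat\omega.
\]

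Using $\Delta_\omega\varphi = n - \tr_\omega\hat\omega$ and choosing $A > C+1$, one obtains $\Delta_\omega Q \geq \tr_\omega\hat\omega - An - C$. At any maximum of $Q$, this forces $\tr_\omega\hat\omega \leq C$. Invoking the determinant bound $\omega^n/\hat\omega^n \leq C$ from Proposition \ref{OT:C0:Prop} and the elementary inequality
\[
\tr_{\hat\omega}\omega \leq n\,\frac{\omega^n}{\hat\omega^n}\,(\tr_\omega\hat\omega)^{n-1}
\]
then yields $\tr_{\hat\omega}\omega \leq C$ at the maximum point. Since $|\varphi| \leq C$ by Proposition \ref{OT:C0:Prop}, the maximum principle propagates this to all of $X$, giving the upper bound $\omega \leq C\hat\omega$. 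The lower bound $\omega \geq C^{-1}\hat\omega$ then follows by combining this upper bound with the determinant estimate once more.

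The main obstacle is the $t$-dependent degeneration of $\hat\omega$ as $t \to \infty$: the right-hand side of \eqref{Prelim:LaplacianLogTrace} contains $\hat\omega^{-1}$-traces and a factor of $1/t$ in the Ricci formula, each of which looks dangerous in isolation. The resolution requires careful bookkeeping of the scaling $\hat\omega = (\hat\omega_0 + t\alpha)/(t+1)$: the inverse $\hat\omega^{-1}$ has its $\bar ww$-component growing like $t+1$ while its remaining components stay bounded, and $\alpha$ is supported precisely where $\hat\omega_0^{-1}$ dominates, so the combinations that actually appear in the relevant traces are uniformly bounded in $t$. For $t$ in a bounded interval the estimate follows directly from short-time continuity of the equation together with the initial data, so without loss of generality one runs the maximum principle argument for $t \geq 1$, where the $1/t$ factors are innocuous.
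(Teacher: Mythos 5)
Your claimed intermediate inequality $\Delta_\omega\log\tr_{\hat\omega}\omega \geq -C - C\tr_\omega\hat\omega$ with a $t$-independent constant is not available, and this is where the proposal breaks. The bisectional curvature contribution $g^{j\bar k}R_{\bar kj}{}^{p\bar q}g_{\bar qp}$ of the degenerating reference metric $\hat\omega = (\hat\omega_0 + t\alpha)/(t+1)$ cannot be bounded uniformly: without the strongly-flat-along-leaves assumption it contributes a factor of $t+1$, and even \emph{with} that assumption the paper only improves this to $\sqrt{t+1}$, namely
\[
g^{j\bar k}R_{\bar kj}{}^{p\bar q}g_{\bar qp} \geq -C\sqrt{t+1}\,\tr_{\hat\omega}\omega\,\tr_\omega\hat\omega,
\]
leading to $\Delta_\omega\log\tr_{\hat\omega}\omega \geq -C\sqrt{t+1}\,\tr_\omega\hat\omega - C + (\text{torsion gradient term})$. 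Your heuristic that ``the combinations that actually appear in the relevant traces are uniformly bounded in $t$'' is exactly the claim that turns out to be false and that the strongly-flat condition only partially rescues. With this $\sqrt{t+1}$ degradation, the barrier $-A\varphi$ with \emph{fixed} $A$ is too weak: one needs $A$ to beat $C\sqrt{t+1}$, which forces $A$ to grow in time. That is precisely why the paper's test function is
\[
G = \log\tr_{\hat\omega}\omega - A\sqrt{t+1}\,\varphi + (t+1)\varphi^2,
\]
and why the sharp $C^0$ estimate $|\varphi| \leq C/(t+1)$ from Proposition \ref{OT:C0:Prop} (not merely $|\varphi| \leq C$, which is what you invoke) is essential to keep $A\sqrt{t+1}\,\varphi$ under control.

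A second, related gap: even if the curvature term were bounded, you still have to handle the Cherrier gradient term $2\Re\, g^{j\bar k}\frac{\partial_j\tr_{\hat\omega}\omega}{\tr_{\hat\omega}\omega}\bar T_k$. At the maximum of your $Q$, the critical-point identity gives $\partial_j\tr_{\hat\omega}\omega/\tr_{\hat\omega}\omega = A\,\partial_j\varphi$, and Young's inequality produces an $A^2|\nabla\varphi|^2_\omega$ term with no counterbalance. In the paper this is exactly what the $(t+1)\varphi^2$ summand is for: $\Delta_\omega\varphi^2 = 2|\nabla\varphi|^2_\omega + 2\varphi(n - \tr_\omega\hat\omega)$ supplies the needed $2(t+1)|\nabla\varphi|^2_\omega$ to cancel the gradient of $\varphi$. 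The final step of the paper also requires a case split on whether $\tr_{\hat\omega}\omega$ is small, not just the single chain of inequalities you outline. In short, the skeleton (Aubin--Yau maximum principle on a log-trace minus a barrier) is right, but the $t$-dependence of the curvature of $\hat\omega$ is the whole difficulty of this proposition, and your proposal assumes it away.
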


\begin{proof}
Following \cite{yau78} we will obtain the $C^2$ estimate by applying the maximum principle to a function with leading term $\log\tr_{\hat\omega}\omega$. We will use $R$ and $T$ to denote the curvature and torsion of the Chern connection $\nabla$ of $\hat\omega$, and $\Delta$ and $R(g)$ to denote the Laplacian and curvature of $\omega$. Equation \eqref{Intro:NormalizedContinuityEquation} gives
\begin{align}\label{OT:C2:Computation1}
-\hat g^{p\bar q}R_{\bar qp}(g)
& = \frac{(t+1)\tr_{\hat\omega}\omega - \tr_{\hat\omega}\omega_0}{t} \\
& \geq -\frac{1}{t}\tr_{\hat\omega}\omega_0 \nonumber \\
& \geq -C \nonumber
\end{align}
for $t\geq 1$, since \eqref{OT:OmegaHat} implies $\tr_{\hat\omega}\omega_0 \leq (t+1)\tr_{\hat\omega_0}\omega_0 \leq C(t+1)$.

Computing as in \cite[Lemma 3.2]{zheng15}, by keeping track of the factors of $t+1$ arising from \eqref{OT:OmegaHat}, the property \eqref{OT:StronglyFlatAlongTheLeaves2} implies that
\begin{gather}
g^{j\bar k}R_{\bar kj}{}^{p\bar q}g_{\bar qp} \geq -C\sqrt{t+1}\tr_{\hat\omega}\omega\tr_\omega\hat\omega \label{OT:C2:Computation2} \\
g^{j\bar k}R'_{\bar kj} - g^{j\bar k}R_{\bar kj} - \hat g^{p\bar q}g^{j\bar k}\hat g_{\bar sr}T^r_{pj}\bar T^s_{qk} \geq -C\tr_\omega\hat\omega. \label{OT:C2:Computation3}
\end{gather}
Note that without the property \eqref{OT:StronglyFlatAlongTheLeaves1}, the first estimate \eqref{OT:C2:Computation2} would have only a factor of $t+1$ on the right-hand side.

Substituting \eqref{OT:C2:Computation1}, \eqref{OT:C2:Computation2}, \eqref{OT:C2:Computation3} into \ref{Prelim:LaplacianLogTrace} and applying \eqref{Prelim:AubinYau} yields
\begin{align}\label{OT:C2:LaplacianLogTrace1}
\Delta\log\tr_{\hat\omega}\omega
& \geq \frac{1}{\tr_{\hat\omega}\omega}\bigg\{-C\sqrt{t+1}\tr_{\hat\omega}\omega\tr_\omega\hat\omega - C\tr_\omega\hat\omega - C + 2\Re g^{j\bar k}\frac{\partial_j\tr_{\hat\omega}\omega}{\tr_{\hat\omega}\omega}\bar T_k\bigg\} \nonumber \\
& \geq -C\sqrt{t+1}\tr_\omega\hat\omega - C + \frac{1}{\tr_{\hat\omega}\omega}2\Re g^{j\bar k}\frac{\partial_j\tr_{\hat\omega}\omega}{\tr_{\hat\omega}\omega}\bar T_k.
\end{align}
Here we have used $\tr_{\hat\omega}\omega\geq C^{-1}$, which follows from \eqref{OT:C0:DeterminantEstimate}.

We now work at a maximum point of
\begin{align*}
G = \log\tr_{\hat\omega}\omega - A\sqrt{t+1}\varphi + (t+1)\varphi^2
\end{align*}
where $A$ is a large constant to be chosen later. At this point
\begin{align*}
\frac{\partial_j\tr_{\hat\omega}\omega}{\tr_{\hat\omega}\omega} = A\sqrt{t+1}\partial_j\varphi - 2(t+1)\varphi\partial_j\varphi
\end{align*}
so
\begin{align*}
\frac{1}{\tr_{\hat\omega}\omega}2\Re g^{j\bar k}\frac{\partial_j\tr_{\hat\omega}\omega}{\tr_{\hat\omega}\omega}\bar T_k
& = \frac{1}{\tr_{\hat\omega}\omega}2\Re g^{j\bar k}\bigg(A\sqrt{t+1}\partial_j\varphi - 2(t+1)\varphi\partial_j\varphi\bigg)\bar T_k \\
& \geq -2(t+1)g^{j\bar k}\partial_j\varphi\partial_{\bar k}\varphi - \frac{CA^2}{(\tr_{\hat\omega}\omega)^2}\tr_\omega\hat\omega
\end{align*}
where we have used the fact that $g^{j\bar k}T_j\bar T_k \leq C\tr_\omega\hat\omega$. We also have
\begin{align*}
\Delta\varphi &= n - \tr_\omega\hat\omega \\
\Delta\varphi^2 = 2g^{j\bar k}&\partial_j\varphi\partial_{\bar k}\varphi + 2\varphi(n - \tr_\omega\hat\omega) 
\end{align*}
so by the $C^0$ estimate
\begin{align*}
\frac{1}{\tr_{\hat\omega}\omega}2\Re g^{j\bar k}\frac{\partial_j\tr_{\hat\omega}\omega}{\tr_{\hat\omega}\omega}\bar T_k + \Delta(-A\sqrt{t+1}\varphi + (t+1)\varphi^2) \quad \quad \quad \quad \quad \quad \\
\quad \quad \quad \quad \quad \quad \geq -\frac{CA^2}{(\tr_{\hat\omega}\omega)^2}\tr_\omega\hat\omega + (A\sqrt{t+1} - C)\tr_\omega\hat\omega - CA\sqrt{t+1}. \nonumber
\end{align*}
Since $\Delta G \leq 0$ at the maximum point, we substitute the above inequality into \eqref{OT:C2:LaplacianLogTrace1} to obtain
\begin{align*}
0 \geq -\frac{CA^2}{(\tr_{\hat\omega}\omega)^2}\tr_\omega\hat\omega + (A - C)\sqrt{t+1}\tr_\omega\hat\omega - CA\sqrt{t+1}.
\end{align*}
Choosing $A \gg C$ we can make the coefficient of the second term positive, so that
\begin{equation}\label{OT:C2:MainInequality1}
0 \geq -\frac{C}{(\tr_{\hat\omega}\omega)^2}\tr_\omega\hat\omega + (\sqrt{t+1}+1)\tr_\omega\hat\omega - C\sqrt{t+1}.
\end{equation}
We now consider two cases. Firstly, if
\begin{equation}\label{OT:C2:Case1}
-\frac{C}{(\tr_{\hat\omega}\omega)^2} \leq -1
\end{equation}
then we immediately obtain $\tr_{\hat\omega}\omega\le C$ at the maximum point which implies $$\tr_\omega\hat\omega\le C$$ by the determinant estimate \eqref{OT:C0:DeterminantEstimate} and the identity \eqref{TraceFlip}. On the other hand, if \eqref{OT:C2:Case1} fails then \eqref{OT:C2:MainInequality1} gives
\begin{align*}
\tr_\omega\hat\omega \le C
\end{align*}
which implies $\tr_{\hat\omega}\omega\le C$ at the maximum point. Thus in both cases, using the fact that we have a $C^0$ estimate on $\varphi$, we obtain uniform equivalence of the metrics as stated in desired.
\end{proof}

\noindent Next we sharpen \eqref{OT:C2:UniformEquivalence}.

\begin{proposition}\label{OT:C2:Prop2}
Let $\omega$ be a solution to the continuity equation \eqref{Intro:NormalizedContinuityEquation}. Then
\begin{equation}\label{OT:C2:TightUniformEquivalence}
(1-\varepsilon(t))\hat\omega\leq\omega\leq (1+\varepsilon(t))\hat\omega
\end{equation}
where $\varepsilon(t)\to 0$ as $t\to\infty$.
\end{proposition}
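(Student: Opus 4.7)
The plan is to sharpen Proposition \ref{OT:C2:Prop1} by refining the maximum principle argument and combining it with the improved determinant estimate $\omega^n/\hat\omega^n = 1 + O(1/t)$ coming from Proposition \ref{OT:C0:Prop}.

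\textbf{Reduction via AM--GM.} Let $\lambda_1(x,t),\dots,\lambda_n(x,t)$ denote the eigenvalues of $\omega$ with respect to $\hat\omega$. By Proposition \ref{OT:C2:Prop1} they are uniformly bounded, $c \le \lambda_i \le C$, while Proposition \ref{OT:C0:Prop} gives $\prod_i \lambda_i = 1 + O(1/t)$. Under these uniform bounds the AM--GM inequality is quantitatively stable: if one can additionally establish $\sup_X \tr_{\hat\omega}\omega = \sup_X \sum_i \lambda_i \le n + \varepsilon(t)$ with $\varepsilon(t)\to 0$, then each $\lambda_i$ is forced to $1$ uniformly, which is exactly \eqref{OT:C2:TightUniformEquivalence}. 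The proof thus reduces to proving this sharp trace estimate.

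\textbf{Sharp trace estimate.} To obtain $\sup_X \tr_{\hat\omega}\omega \le n + \varepsilon(t)$, I would apply the maximum principle to a weighted auxiliary function of the form
\[
G = \tr_{\hat\omega}\omega - A(t+1)\varphi + B(t+1)^2\varphi^2,
\]
patterned on the auxiliary function used in Proposition \ref{OT:C2:Prop1}; the weights $(t+1)$ and $(t+1)^2$ are calibrated against the $C^0$ bound $|\varphi| \le C/(t+1)$ from Proposition \ref{OT:C0:Prop} so that the $\varphi$-terms remain uniformly bounded. At a maximum point of $G$, combining the formula \eqref{Prelim:LaplacianTrace} for $\Delta_\omega\tr_{\hat\omega}\omega$, the contracted continuity equation
\[
-\hat g^{p\bar q}R_{\bar qp}(\omega) = \frac{(t+1)\tr_{\hat\omega}\omega - \tr_{\hat\omega}\omega_0}{t},
\]
the curvature/torsion bounds \eqref{OT:C2:Computation2}--\eqref{OT:C2:Computation3}, and $\Delta_\omega\varphi = n - \tr_\omega\hat\omega$ produces an inequality that can be solved for $\tr_{\hat\omega}\omega$. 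The crucial quantity $\tr_{\hat\omega}\omega_0$ is expanded using $\omega_0 = \hat\omega_0 + i\ddb\varphi_0$ and $(t+1)\hat\omega = \hat\omega_0 + t\alpha$ as
\[
\tr_{\hat\omega}\omega_0 = n(t+1) - t\tr_{\hat\omega}\alpha + \tr_{\hat\omega}(i\ddb\varphi_0),
\]
and the asymptotic $\tr_{\hat\omega}\alpha \to n-1$, coming from the explicit degeneration $\hat\omega \to \alpha$ in the $w$-direction, contributes precisely the $n(t+1)$ term needed to match the bound $\tr_{\hat\omega}\omega \le n + \varepsilon(t)$ at the maximum; the remaining contributions are of lower order in $t$.

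\textbf{Main obstacle.} The principal obstacle is the $\sqrt{t+1}$ growth of the curvature-of-$\hat\omega$ term in \eqref{OT:C2:Computation2} which appears in the Laplacian, reflecting the degeneration of $\hat\omega$ towards the non-positive form $\alpha$. The weighted test function $G$ is designed exactly so that $\Delta_\omega(-A(t+1)\varphi + B(t+1)^2\varphi^2)$ produces a positive term of the form $A(t+1)\tr_\omega\hat\omega$ which dominates the error $C\sqrt{t+1}\tr_\omega\hat\omega$ for $A$ sufficiently large, while the $C^0$ bound on $\varphi$ keeps all remaining contributions uniformly bounded. Tuning $A$ and $B$ so that this balance not only closes the maximum principle argument but extracts the quantitative rate $\varepsilon(t)\to 0$ (rather than merely the uniform constant $C$ of Proposition \ref{OT:C2:Prop1}) is the heart of the argument.
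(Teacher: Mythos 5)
Your high-level plan matches the paper: reduce to a sharp trace estimate, then conclude via quantitative stability of AM--GM (the paper invokes \cite[Lemma 2.6]{twy14}, which is exactly that). But the auxiliary function you propose cannot produce the sharp trace estimate, and the expansion of $\tr_{\hat\omega}\omega_0$ you sketch is a red herring.

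The decisive problem is the weight on $\varphi$. You take $G = \tr_{\hat\omega}\omega - A(t+1)\varphi + B(t+1)^2\varphi^2$ with $A,B$ time-independent, so by the $C^0$ bound $\|\varphi\|_{C^0}\le C/(t+1)$ the $\varphi$-terms in $G$ are $O(1)$. That is exactly the issue: comparing $G$ at an arbitrary point with $G$ at its maximum then gives
\[
\tr_{\hat\omega}\omega(x)\le \tr_{\hat\omega}\omega(x_0) + A(t+1)\bigl(\varphi(x)-\varphi(x_0)\bigr) + B(t+1)^2\bigl(\varphi^2(x_0)-\varphi^2(x)\bigr) \le \tr_{\hat\omega}\omega(x_0) + O(1),
\]
so even if you obtain $\tr_{\hat\omega}\omega(x_0)\le n+\varepsilon(t)$ you only recover the old bound $\tr_{\hat\omega}\omega\le C$. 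The error from passing off the maximum point must \emph{decay}, not merely stay bounded. The paper uses the simpler $G = \tr_{\hat\omega}\omega - A\varphi$ and lets $A$ depend on $t$: at the maximum the curvature error $-C\sqrt{t+1}$ (from \eqref{OT:C2:Computation2}, which is unavoidable) together with $\Delta\varphi = n-\tr_\omega\hat\omega$ yields $\tr_\omega\hat\omega\le n + C\sqrt{t+1}/A$; this is converted into a bound on $\tr_{\hat\omega}\omega$ at that point via Maclaurin's inequality \eqref{OT:C2:TraceInequality} combined with the determinant estimate \eqref{OT:C0:DeterminantEstimate} (a step your proposal omits entirely — the maximum principle bounds $\tr_\omega\hat\omega$, not $\tr_{\hat\omega}\omega$, at $x_0$); the oscillation error is then $2A\|\varphi\|_{C^0}\le CA/(t+1)$. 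Choosing $A=(t+1)^{3/4}$ makes both error terms $\sim (t+1)^{-1/4}\to 0$. Your proposal has no free parameter to optimize and cannot achieve this balance.

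The expansion $\tr_{\hat\omega}\omega_0 = n(t+1) - t\tr_{\hat\omega}\alpha + \tr_{\hat\omega}(i\ddb\varphi_0)$ does not help. First, the paper never needs it (it simply bounds $-\hat g^{p\bar q}R_{\bar qp}(\omega)\ge -C$ via $\tr_{\hat\omega}\omega_0\le C(t+1)$). Second, even if you carry it through, the term $\tr_{\hat\omega}(i\ddb\varphi_0)/t$ is only $O(1)$, not $o(1)$: $\hat g^{w\bar w}$ grows like $t+1$, so any $\ddb\varphi_0$-contribution in the $w$-direction contributes $O(1)$ after dividing by $t$. Third, whatever positivity $\frac{(t+1)(\tr_{\hat\omega}\omega-n)}{t}$ gives you is swamped by the $-C\sqrt{t+1}$ curvature error, so without the time-dependent weight $A=(t+1)^{3/4}$ you still cannot close the argument.
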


\begin{proof}
If $\sigma_k$ denotes the elementary symmetric polynomial of degree $k$ in the variables $\lambda_j$, then Maclaurin's inequality
\begin{align*}
\bigg(\frac{\sigma_{n-1}}{n}\bigg)^\frac{1}{n-1} \leq \frac{\sigma_1}{n}
\end{align*}
implies
\begin{equation}\label{OT:C2:TraceInverseInequality}
\tr_\omega\hat\omega \leq \frac{1}{n^{n-2}}\frac{\left(\tr_{\hat\omega}\omega\right)^{n-1}}{\omega^n/\hat\omega^n}
\end{equation}
and similarly
\begin{equation}\label{OT:C2:TraceInequality}
\tr_{\hat\omega}\omega \leq \frac{1}{n^{n-2}}\frac{\left(\tr_\omega\hat\omega\right)^{n-1}}{\hat\omega^n/\omega^n}.
\end{equation}
By \eqref{Prelim:LaplacianTrace} and our earlier computations \eqref{OT:C2:Computation1}, \eqref{OT:C2:Computation2}, \eqref{OT:C2:Computation3} we have
\begin{align*}
\Delta\tr_{\hat\omega}\omega \geq -C\sqrt{t+1}
\end{align*}
and so it follows that
\begin{align*}
\Delta(\tr_{\hat\omega}\omega - A\varphi) \geq A(\tr_\omega\hat\omega - n) - C\sqrt{t+1}.
\end{align*}
At a maximum point of $\tr_{\hat\omega}\omega - A\varphi$ we have
\begin{align*}
\tr_\omega\hat\omega \leq n + \frac{C\sqrt{t+1}}{A}
\end{align*}
so by \eqref{OT:C2:TraceInequality} and \eqref{OT:C0:DeterminantEstimate}
\begin{align*}
\tr_{\hat\omega}\omega \leq \frac{e^{C/t}}{n^{n-2}}\left(n + \frac{C\sqrt{t+1}}{A}\right)^{n-1}
\end{align*}
at this point; at an arbitrary point
\begin{align*}
\tr_{\hat\omega}\omega
& \leq \frac{e^{C/t}}{n^{n-2}}\left(n + \frac{C\sqrt{t+1}}{A}\right)^{n-1} + 2A\|\varphi\|_{C^0} \\
& \leq \frac{e^{C/t}}{n^{n-2}}\left(n + \frac{C\sqrt{t+1}}{A}\right)^{n-1} + \frac{CA}{t+1} \nonumber
\end{align*}
by \eqref{OT:C0:PhiEstimate}. After choosing $A = (t+1)^\frac34$ this becomes
\begin{align*}
\tr_{\hat\omega}\omega \leq \frac{e^{C/t}}{n^{n-2}}\left(n + \frac{C}{(t+1)^\frac14}\right)^{n-1} + \frac{C}{(t+1)^\frac14}
\end{align*}
which implies $\tr_{\hat\omega}\omega\leq n + \varepsilon(t)$. Applying \eqref{OT:C2:TraceInverseInequality} gives $\tr_\omega\hat\omega\le n+\varepsilon(t)$. Finally, applying \cite[Lemma 2.6]{twy14} gives \eqref{OT:C2:TightUniformEquivalence}. 
\end{proof}

\noindent From Proposition \ref{OT:C2:Prop2} we may deduce that $(X,\omega)$ converges in the Gromov-Hausdorff sense to $\mathbb T^{n-1}$ in the same way as in the proof of Theorem 1.1 in \cite{zheng15}.

\subsection{Higher order estimates}\label{OT:HO}

In this subsection we prove higher order estimates using our $C^2$ estimate. The strategy is as follows: we first use a rescaling argument on the metric so that we can apply Theorem \ref{C3} to obtain the third order estimate, and once we have the third order estimate, we can bootstrap the scalar equation of the continuity equation to obtain all higher order estimates. In this section, we abuse the notation $\varphi$, $\omega$ and $\hat \omega$ for the objects on $X$ and their pullback on $\mathbb H^{n-1} \times \mathbb C$, which should be clear from the context.

\begin{definition}\label{weaklyparallel}
    We say that a real $(1,1)$-form $\Theta$ on $\mathbb H^{n-1} \times \mathbb C$ is weakly parallel along the leaves if $\frac{\partial}{\partial w} \Theta_{\overline w w} = 0$.
\end{definition}

\noindent We remark that this condition is well-defined since the coordinates we are working in are global. The metric $\omega_{OT}$ defined in \eqref{OT:OT} is an example of a $(1,1)$ form that is weakly parallel along the leaves.

\begin{proposition}\label{OT:HO:Prop}
    Given $C^{-1} \hat \omega \leq \omega \leq C \hat\omega$ on $X \times [0,\infty)$ and $\sqrt{-1}\ddb \varphi_0$ is weakly parallel along the leaves, we have 
    \begin{equation*}
    \|\varphi\|_{C^k(X,\omega_E)}\le C_k
    \end{equation*}
     where $C_k$ is a uniform constant that may depend on $k,C$.
\end{proposition}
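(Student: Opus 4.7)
The plan is to obtain a uniform third-order bound on $\omega$ by combining a rescaling argument on the universal cover with Theorem \ref{Intro:ThmC3}, then to bootstrap using the scalar form of the continuity equation.

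Working on the universal cover $\mathbb H^{n-1}\times\mathbb C$ with coordinates $(z_1,\dots,z_{n-1},w)$, for each $t_0\geq 1$ I would introduce the coordinate rescaling $\Phi_{t_0}(z,\tilde w)=(z,\sqrt{t_0+1}\,\tilde w)$, designed to counteract the collapse of $\hat\omega$ in the $w$-direction: the $(\tilde w,\tilde w)$-component of $\Phi_{t_0}^*\hat\omega(t_0)$ equals $cy_1\cdots y_{n-1}$ (independent of $t_0$), while its $(z,z)$-block tends to $\alpha$, so that $\Phi_{t_0}^*\hat\omega(t_0)$ is uniformly equivalent on any fixed ball to a nondegenerate limiting Hermitian metric. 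Setting $\tilde\omega(s)=\Phi_{t_0}^*\omega(s)$ and $\tilde\omega_0=\Phi_{t_0}^*\omega_0$, the family $\tilde\omega$ still solves the normalized continuity equation with initial data $\tilde\omega_0$, since the Ricci tensor commutes with pullback.

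I would then apply Theorem \ref{Intro:ThmC3} to $\tilde\omega$ on this ball for $s\in[t_0/2,t_0]$ with reference $\hat\chi=\Phi_{t_0}^*\hat\omega(t_0)$ and cutoff $\varepsilon=t_0/2$. The hypothesis $C^{-1}\hat\omega\leq\omega\leq C\hat\omega$, together with the direct observation that $\Phi_{t_0}^*\hat\omega(s)$ and $\Phi_{t_0}^*\hat\omega(t_0)$ are uniformly equivalent for $s\in[t_0/2,t_0]$, yields the required uniform equivalence of $\tilde\omega$ and $\hat\chi$; boundedness of the curvature and its first derivative for $\hat\chi$ follows by direct computation from $\hat\omega(t)=(\hat\omega_0+t\alpha)/(t+1)$. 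The most delicate ingredient is uniform control of the first derivative of $\tilde\omega_0$ and the first two derivatives of its torsion. Here the torsion identity \eqref{Prelim:Torsion} gives $T(\omega_0)=T(\omega_{OT})$, eliminating dependence on $\varphi_0$ in the torsion itself, and many components of $T(\omega_{OT})$ vanish by direct computation from its explicit form. The weakly parallel hypothesis $\partial_w\partial_w\partial_{\bar w}\varphi_0=0$, combined with $\partial_w(y_1\cdots y_{n-1})=0$, forces the $\tilde w$-derivatives of the $(\tilde w,\tilde w)$-component of $\tilde\omega_0$, which would otherwise carry a factor of $(t_0+1)^{3/2}$, to vanish, and a careful accounting using the explicit structure of $\omega_{OT}$ yields the remaining needed bounds.

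Theorem \ref{Intro:ThmC3} then produces $|\hat\nabla\tilde\omega(t_0)|^2_{\tilde\omega(t_0)}\leq C$ on a smaller ball, uniformly in $t_0$, which translates back to a uniform third-order bound on $\omega(t_0)$ on $X$. With this in hand, the higher-order bounds follow by the bootstrap of Corollary \ref{Intro:CorCk}: the scalar equation \eqref{OT:C0:ScalarEquation}, differentiated once, yields a linear second-order elliptic equation for $\partial_\ell\varphi$ whose coefficients are now H\"older continuous, and iterating Schauder estimates at successively higher H\"older regularity gives $\|\varphi\|_{C^k(X,\omega_E)}\leq C_k$ for every $k$.

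The main obstacle is the uniform-in-$t_0$ control of the rescaled derivatives of $\tilde\omega_0$ and its torsion; the weakly parallel hypothesis is precisely what cancels the dangerous factors of $(t_0+1)^{1/2}$ that would otherwise arise from differentiating in the collapsing $w$-direction and destroy the uniformity of the constant in Theorem \ref{Intro:ThmC3}.
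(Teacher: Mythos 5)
Your proposal is correct and follows essentially the same route as the paper: rescale by $(z,w)\mapsto(z,\sqrt{t+1}\,w)$ on the universal cover to undo the leaf collapse, observe the rescaled solution still solves the (pulled-back) continuity equation and is locally uniformly equivalent to a fixed nondegenerate metric, apply Theorem \ref{Intro:ThmC3}, note that the weakly-parallel hypothesis kills the $(t+1)^{3/2}$ growth of the $\tilde w$-derivative of $(\Phi_t^*\,i\ddb\varphi_0)_{\bar ww}$ while the torsion identity \eqref{Prelim:Torsion} eliminates dependence of the torsion on $\varphi_0$, and finally bootstrap via Corollary \ref{Intro:CorCk}. The only presentational difference is your choice of reference metric: you take $\hat\chi=\Phi_{t_0}^*\hat\omega(t_0)$ whereas the paper takes $\hat\chi=\omega_E$, which is marginally cleaner since $\omega_E$ is flat and genuinely independent of $t_0$; also worth making explicit, as the paper does, is that the remaining $O(t+1)$ growth of $\nabla_E\Phi_t^*\omega_0$ (coming from $\Phi_t^*\omega_{OT}$) is harmless because the first derivative of the initial metric enters the proof of Theorem \ref{Intro:ThmC3} only through the Ricci identity with a $1/t$ prefactor, so only growth strictly faster than linear is dangerous -- which is precisely what the weakly-parallel condition rules out.
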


\noindent Notice Theorem \ref{Intro:ThmC3} doesn't apply directly since the reference metric $\hat\omega$ is moving and degenerates when $t\to\infty$. To deal with this, we adapt the following rescaling argument used by Angella-Tosatti in \cite{at21}. 

\begin{proof}
Let $\lambda_t: \mathbb H^{n-1}\times\mathbb C \rightarrow\mathbb H^{n-1}\times\mathbb C$ given by $$\lambda_t (z_1,\ldots,z_{n-1},w)=(z_1,\ldots,z_{n-1},\sqrt{t+1}w),$$ so that we are stretching in the leaf directions. It follows that
\begin{align*}
\lambda_t^*\alpha = \alpha,\  \lambda_t^*\beta = (t+1)\beta, \ \lambda_t^*\gamma=\gamma.
\end{align*}
Recall $\hat \omega(t) = \frac{\omega_{OT} + t \alpha }{t+1} = \alpha+ \frac{\beta}{t+1} + \frac{\gamma}{t+1}$. It follows that 
\begin{align}
     \lambda^*_t \omega_{OT} &=  \alpha + (t+1) \beta + \gamma, \nonumber\\
     \lambda^*_t \hat \omega(t) &=  \alpha+ \beta+ \frac{\gamma}{t+1}. \label{OT:HO:stretchedrefmetric}
\end{align}

Since $\omega$ is a solution of \eqref{Intro:ContinuityEquation}, it follows that $\lambda_t^*\omega(t)$ satisfies
\begin{equation}
    \frac{(t+1)\lambda_t^*\omega(t) - \lambda_t^*\omega(0)}{t} = -\Ric(\lambda_t^*\omega(t)) \label{OT:HO:pullbackeqn}
\end{equation}
for any $t \in [0,\infty)$.

Next we want to apply the Calabi estimate to \eqref{OT:HO:pullbackeqn}. To do that, we need to establish the uniform equivalence between $\lambda_t^*\omega(t)$ and a fixed background metric (i.e. the Euclidean metric $\omega_E$) on compact subsets of $\mathbb H^{n-1}\times\mathbb C$.

By \eqref{OT:HO:stretchedrefmetric}, we have for any $t \in [0,\infty)$,
\begin{align*}
    C^{-1} \omega_E \leq \alpha + \beta \leq \lambda_t^* \hat \omega(t) \leq \alpha +\beta + \gamma \leq C \omega_E
\end{align*}
since both $ \alpha + \beta$ and $\alpha +\beta + \gamma $ are locally uniformly equivalent to $\omega_E$. It follows from the $C^2$ estimate of Proposition \ref{OT:C2:Prop1} and the above inequality that on any compact subset $K$ of $\mathbb H^{n-1}\times\mathbb C$ for all $t \ge 0$,
\begin{align*}
    C_K^{-1}\omega_E\le  C^{-1}\lambda_t^*\hat \omega(t) \le \lambda_t^*\omega(t) \le  C\lambda_t^*\hat \omega(t)  \le C_K \omega_E.
\end{align*}

\noindent Now we are in position to apply Theorem \ref{Intro:ThmC3} to the equation \eqref{OT:HO:pullbackeqn} with $\hat \chi = \omega_E$ and $\chi_0 = \lambda_t^*\omega_0$. Notice $\lambda_t^* \hat \omega(t)$ is smoothly uniformly bounded and it lies in the same $\ddb$-class as $\lambda_t^* \omega(t)$.  It follows the torsion tensor of $\lambda_t^* \omega(t)$ is smoothly uniformly bounded. Moreover, since we assume $\sqrt{-1}\ddb\varphi_0$ is weakly parallel along the leaves, we have $\frac{\|\nabla_E \lambda_t^* \omega_0\|_{\omega_E}}{t}$ is uniformly bounded on $[1,\infty)$. Tracing through the proof of Theorem \ref{Intro:ThmC3}, a bound on the first derivative of the initial metric by a linear factor of $t$ is sufficient for the estimate to go through. Thus, all the assumptions for Theorem \ref{Intro:ThmC3} are fulfilled. So we have
\begin{align*}
    \|\lambda_t^* \omega(t) \|_{C^1(K,\omega_E)}\le C_{K}
\end{align*}
for $t\geq 1$. Next we transfer the estimate from compact subsets on $\mathbb H^{n-1} \times \mathbb C$ to the OT manifold $X$. Let $\hat K$ be a fixed compact set which contains $\lambda^{-1}_t (K)$ for all $t$. For example we could take $\hat K$ to be the closure of $\bigcup_t \lambda^{-1}_t (K)$, which is bounded by definition of $\lambda_t$. We get
\begin{align*}  \|\omega(t)\|_{C^1(K,\omega_E)}&=\|\lambda_t^*\omega(t)\|_{C^1(\lambda_t^{-1}K,\lambda_t^*\omega_E)}\\
&\le \|\lambda_t^*\omega(t)\|_{C^1(\lambda_t^{-1}K,\omega_E)}\\
&\le \|\lambda_t^*\omega(t)\|_{C^1(\hat K,\omega_E)}\\
&\le C_{\hat K}.
\end{align*}
for all $t\ge 1$. The first inequality follows from $\lambda_t^* \omega_E \ge \omega_E$. Note that the constant dependence on $\hat K$ can be reduced to $K$ since $\hat{K}$ only depends on $K$ but not on $t$. 

Now we take $K$ to be the closure of the fundamental domain; thus, we have established the third order estimate:
\begin{align*}
    \|\omega(t)\|_{C^1(X,\omega_{OT})} \le C.
\end{align*}
Finally we may apply Corollary \ref{Intro:CorCk} to obtain $C^k$ estimates of all orders for $\varphi$, which completes the proof of Proposition \ref{OT:HO:Prop}.
\end{proof}

\noindent Proposition \ref{OT:C2:Prop2} and Proposition \ref{OT:HO:Prop} together imply that $\omega$ converges smoothly to $-\alpha$ as $t\to\infty$.

\subsection{Ricci curvature bounds}\label{OT:Ricci}

In this subsection we prove the final claim of Theorem \ref{Intro:ThmOT}. The lower bound on the Ricci curvature holds immediately from the definition of the continuity equation, while the upper bound on the Ricci curvature relies on our $C^2$ estimate.

\begin{proposition}\label{OT:Ricci:Prop}
We have
\begin{align*}\label{OT:Ricci:Estimate}
    -C\omega\leq\Ric\omega\leq C\omega.
\end{align*}
\end{proposition}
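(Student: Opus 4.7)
The plan is to deduce both bounds directly from the continuity equation by algebraic rearrangement, using the $C^2$ estimate to turn the constant bound on $\omega_0$ into a bound against $(t+1)\omega$. Rewriting \eqref{Intro:NormalizedContinuityEquation} gives the identity
\begin{equation*}
\Ric\omega = \frac{\omega_0 - (t+1)\omega}{t},
\end{equation*}
which I will use for both directions. We may as well assume $t\geq 1$, since on $[0,1]$ we have smooth dependence on the initial data and so short-time bounds are automatic.

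The lower bound is essentially free: since $\omega_0 \geq 0$ as a Hermitian metric, the identity above immediately yields
\begin{equation*}
\Ric\omega \geq -\frac{t+1}{t}\omega \geq -2\omega
\end{equation*}
for $t\geq 1$, which suffices.

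For the upper bound, the key observation is that $\omega_0 = \hat\omega_0 + i\ddb\varphi_0$ is a fixed Hermitian metric on the compact manifold $X$, so there is a uniform constant $C_0$ with $\omega_0 \leq C_0 \hat\omega_0$. Combining this with the formula \eqref{OT:OmegaHat} for $\hat\omega$ and with $\alpha \geq 0$, I get $\hat\omega_0 \leq \hat\omega_0 + t\alpha = (t+1)\hat\omega$, hence $\omega_0 \leq C_0(t+1)\hat\omega$. The $C^2$ estimate of Proposition \ref{OT:C2:Prop1} then gives $\hat\omega \leq C\omega$, so $\omega_0 \leq C_0 C (t+1)\omega$. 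Plugging into the identity,
\begin{equation*}
\Ric\omega \leq \frac{C_0 C(t+1)\omega - (t+1)\omega}{t} \leq C'\omega
\end{equation*}
for $t\geq 1$. Taking the larger of the two constants completes the proof.

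No real obstacle is expected here; the work was already done in securing $C^{-1}\hat\omega \leq \omega \leq C\hat\omega$, and what remains is a one-line manipulation of the continuity equation. The only thing worth being careful about is that the upper bound on $\omega_0$ by $(t+1)\hat\omega$ uses the non-negativity of $\alpha$ and the fact that $\omega_0$ is a bounded fixed form, both of which are available without any further estimate.
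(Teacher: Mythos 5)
Your proof is correct and follows essentially the same route as the paper: both rearrange the normalized continuity equation to express $\Ric\omega$, obtain the lower bound immediately from $\omega_0 \geq 0$, and obtain the upper bound by chaining $\omega_0 \leq C\hat\omega_0 \leq C(t+1)\hat\omega \leq C(t+1)\omega$ using \eqref{OT:OmegaHat} and the uniform equivalence from Proposition \ref{OT:C2:Prop1}. The only cosmetic difference is that the paper drops the nonpositive $-(t+1)\omega/t$ term before bounding $\omega_0$, whereas you carry it along; the content is identical.
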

\begin{proof}
From Equation \eqref{Intro:NormalizedContinuityEquation} we have
\begin{align*}
    \Ric\omega = -\frac{(t+1)\omega - \omega_0}{t},
\end{align*}
which gives us that
\begin{align*}
    -\left(1+\frac{1}{t}\right)\omega\leq\Ric\omega\leq\frac{1}{t}\omega_0.
\end{align*}
Since $\omega_0\leq C\hat\omega_0 \leq C(t+1)\hat\omega\leq C(t+1)\omega$ by Proposition \ref{OT:C2:Prop1} and $t\ge 1$, we obtain the desired Ricci curvature bounds.
\end{proof}

\section*{Acknowledgments}

\noindent The authors are very grateful to Valentino Tosatti for numerous helpful correspondences. We would also like to thank Nikita Klemyatin for his useful suggestions. Finally we would like to thank D. H. Phong and Ben Weinkove for their continued support and feedback.

\bibliography{references}
\end{document}